\documentclass{article}

\pdfoutput=1

\usepackage{arxiv}
\usepackage[utf8]{inputenc} 
\usepackage[T1]{fontenc}    
\usepackage{hyperref}       
\usepackage{url}            
\usepackage{booktabs}       
\usepackage{amsfonts}       
\usepackage{nicefrac}       
\usepackage{microtype}      
\usepackage{lipsum}		
\usepackage{graphicx}
\usepackage{natbib}
\usepackage{multirow}
\usepackage{makecell}
\usepackage[normalem]{ulem}
\usepackage{doi}
\usepackage{amsmath,amssymb,amsfonts}
\usepackage{algorithm,algorithmic}
\usepackage{amsthm}
\newtheorem{theorem}{\indent Theorem}[section]
\newtheorem{lemma}[theorem]{\indent Lemma}

\newtheorem{remark}{\indent Remark}[section]

\title{On the robustness of double-word addition algorithms}



\author{ 
    Yuanyuan Yang\thanks{These authors contributed equally to this work.} \\
    Huawei Technologies Co., Ltd. \\
    yuanyuanyang@whu.edu.cn \\
    \And
    XinYu Lyu\footnotemark[1] \\
    School of Mathematics and Statistics, Wuhan University \\
    lvxinyu.math@whu.edu.cn \\
    \And
    Sida He \\
    Huawei Technologies Co., Ltd. \\
    hesida1@huawei.com \\
    \And
    Xiliang Lu \\
    School of Mathematics and Statistics and Hubei Key Laboratory of Computational Science, Wuhan University \\
    xllv.math@whu.edu.cn \\
    \And
    Ji Qi \\
    Huawei Technologies Co., Ltd. \\
    ryan.qiji@huawei.com \\
    \And
    Zhihao Li\thanks{Corresponding author} \\
    Huawei Technologies Co., Ltd. \\
    lizhihao.ict@huawei.com
}



\hypersetup{
pdftitle={A template for the arxiv style},
pdfsubject={q-bio.NC, q-bio.QM},
pdfauthor={David S.~Hippocampus, Elias D.~Striatum},
pdfkeywords={First keyword, Second keyword, More},
}

\begin{document}
\maketitle

\begin{abstract}
We demonstrate that, even when there are moderate overlaps in the inputs of sloppy or accurate double-word addition algorithms in the QD library, these algorithms still guarantee error bounds of $O(u^2(|a|+|b|))$ in faithful rounding. Furthermore, the accurate algorithm can achieve a relative error bound of $O(u^2)$ in the presence of moderate overlaps in the inputs when rounding function is round-to-nearest. The relative error bound also holds in directed rounding, but certain additional conditions are required. Consequently, in double-word multiplication and addition operations, we can safely omit the normalization step of double-word multiplication and replace the accurate addition algorithm with the sloppy one. Numerical experiments confirm that this approach nearly doubles the performance of double-word multiplication and addition operations, with negligible precision costs. Moreover, in directed rounding mode, the signs of the errors of the two algorithms are consistent with the rounding direction, even in the presence of input overlap. This allows us to avoid changing the rounding mode in interval arithmetic. We also prove that the relative error bound of the sloppy addition algorithm exceeds $3u^2$ if and only if the input meets the condition of Sterbenz's Lemma when rounding to nearest. These findings suggest that the two addition algorithms are more robust than previously believed.
\end{abstract}

\keywords{double-word \and multiple precision \and error-free transformation \and floating-point arithmetic \and interval arithmetic}

\section{Introduction}

Floating-point numbers are fundamental in numerical computation, playing a crucial role in representing real numbers on computers \cite{higham2002accuracy,overton2001numerical}. They are extensively used in scientific computing, engineering, and finance to approximate real-world quantities and perform mathematical operations \cite{gustafson1997beating,monniaux2008certified}.

Managing the errors inherent in floating-point computations poses a significant challenge \cite{goldberg1991what}. Due to the finite precision of floating-point representations, operations involving these numbers can introduce rounding errors that accumulate and impact the accuracy of results. Understanding and mitigating these errors are essential for ensuring the reliability and correctness of numerical computations \cite{moler2004floating,hubbert2012understanding}.

To improve the accuracy of numerical computations, two fundamental algorithms have significantly influenced the management of rounding errors in floating-point addition: the 2Sum \cite{TAOCP,10.1007/BF01975722} and Fast2Sum \cite{Dekker1971Split} algorithms. These error-free transforms aim to compute the sum of two floating-point numbers with greater precision than the straightforward addition operation.

The IEEE 754 standard \cite{IEEE7542008} defines the format and behavior of floating-point numbers, ensuring a standardized and predictable computational environment across various platforms. Currently, IEEE 64-bit floating-point arithmetic is generally accurate enough for most scientific applications. However, an increasing number of critical scientific computing applications require higher numeric precision \cite{1425396}. To meet these demands, researchers have explored extended precision arithmetic, such as double-word arithmetic, often referred to as "double-double" in the literature. This approach involves representing a real number as the unevaluated sum of two floating-point numbers.

Since the floating-point format is typically implemented in hardware, double-word arithmetic offers performance advantages over software-emulated multiple precision methods.  Additionally, double-word arithmetic benefits from SIMD (Single Instruction, Multiple Data) instructions designed for the floating-point format \cite{9603410,10.1007/978-3-030-86976-2_14,DDAVX,9370307}, which can help minimize speed penalties.

Similar to 2Sum and Fast2Sum, two foundational addition algorithms exist in the realm of double-double data types: sloppy add (Algorithm \ref{algo:sloppyAdd}) and accurate add (Algorithm \ref{algo:accurateAdd}). These algorithms utilize the error-free transforms of 2Sum and Fast2Sum to achieve higher precision results. Notably, there is nearly a twofold difference in the number of floating-point operations used between sloppy add and accurate add.

In the analysis of errors in floating-point computations, considering error bounds \cite{wilkinson1963rounding} is crucial. These bounds establish an upper limit on the error that a computation may introduce, providing a quantitative assessment of result accuracy.   By estimating and bounding errors, researchers and practitioners can make informed decisions regarding the reliability of their computations and take appropriate measures to mitigate potential errors \cite{monniaux2008certified,olver2014numerical}.


For double-word addition,  \cite{JM2018DoubleDoubleError} advises against employing the sloppy add for adding two double-word numbers unless both operands have the same sign. This advice stems from the observation that current theoretical proofs do not always guarantee the relative error bound of sloppy add, unlike accurate add, which is known to provide correct results.

In practice, the sloppy add algorithm is widely adopted due to its higher efficiency compared to accurate add. Analyzing the error bound of the sloppy add algorithm is therefore crucial. In this study, we demonstrate that even in cases where there is moderate overlap among the inputs $(a_h, a_l)$ and $(b_h, b_l)$ in double-double addition algorithms, it is possible to establish a guaranteed error bound under the conditions of faithful rounding when using sloppy add. This finding provides a definitive conclusion to ensure the validity of sloppy add. 

Our analysis implies that replacing accurate add with sloppy add and omitting the normalization step of multiplication in the multiplication and addition operations is feasible. Additionally, our numerical experiments reveal that by making this substitution, the loss of precision is almost negligible, significantly enhancing the performance of double-double addition. Furthermore, we discovered that the two double-word addition algorithms are more robust than expected, with the rounding error being consistent with the rounding direction of floating-point operations, which is beneficial in interval arithmetic.

The structure of the article is outlined as follows. Section 2 introduces the symbols and background knowledge. Section 3 provide a detailed analysis and proof of the robustness of the double-double addition algorithm. Section 4 explores the application of the core ideas in double-double multiplication and interval arithmetic. Section 5 presents the numerical experiments.

\section{Notations and Preliminary}

In this paper, we operate under the framework of a radix-$2$, precision-$p$ floating-point (FP) arithmetic system, which we assume has an unlimited exponent range, thereby precluding any occurrences of underflow or overflow. Here, $p$ denotes the precision of the FP arithmetic system, and $\mathbb{F}_p$ signifies the set of all FP numbers of precision $p$.

Our analysis primarily concentrates on the round-to-nearest (RN) method and two directed rounding functions: RD (round toward $-\infty$) and RU (round toward $\infty$). A floating-point number $y$ is said to be a faithful rounding of a real number $x$ if $y \in \{\text{RD}(x), \text{RU}(x)\}$, which we denote as $y = \circ(x)$. It is evident that for any $x \in \mathbb{F}$, the round-to-nearest value $\text{RN}(x)$ is equal to $x$, as is the faithful rounding $\circ(x) = x$. When discussing directed rounding, we confine our consideration to RD and RU exclusively. The unit roundoff, represented by $u$, is defined as $u = 2^{-p}$ for RN rounding, and $u = 2^{1-p}$ when dealing with directed rounding or faithful rounding.

 
 
   For $x \in \mathbb{F}$, the notations $\text{ufp}(x)$, $\text{ulp}(x)$, and $\text{uls}(x)$ denote the unit in the first place, unit in the last place, and unit in the last significant place, respectively. The term $e_x$ represents the exponent bits of the floating-point (FP) number $x$. Consider, for instance, a double-precision FP number expressed as:
    \begin{equation}
    	x=(1.x_1x_2\cdots x_{49}100)_2\cdot 2^e = (1x_1x_2\cdots x_{49}100)_2 \cdot 2^{e-52},  
    \end{equation}
    we have, $\text{ufp}(x)=2^{e}$, $\text{ulp}(x)=2^{e-52}$, $\text{uls}(x)=2^{e-50}$, $e_x=e-52$.
    
    And one can easily check that  
\begin{equation*}
     2^{-p}|x|\leq\text{ulp}(x) \leq 2^{-p+1}|x|, |x-\text{RN}(x)|\leq u|x|.
\end{equation*}

We will frequently invoke the following Sterbenz Lemma to analyze the error bounds of certain algorithm

    \begin{lemma}[\cite{SterbenzLemma}]
	In a floating-point system with subnormal
	numbers available, if $x$ and $y$ are finite floating-point numbers such that
	\begin{equation}
		\frac y 2 \leq x \leq 2y,
	\end{equation}
	then $x-y$ is exactly representable as a floating-point number.
	\end{lemma}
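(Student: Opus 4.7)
My plan is to prove the lemma by a short exponent bookkeeping argument, after using symmetry to reduce to the positive case. First I would observe that the hypotheses $y/2 \le x \le 2y$ force $x$ and $y$ to have the same sign (since $y/2$ and $2y$ share the sign of $y$), so by possibly negating both operands — which is an exact FP operation — I may assume $0 < y \le x \le 2y$. Under this normalization the quantity $x - y$ is nonnegative and at most $y$, so if it is representable at all it will be of magnitude no larger than $y$.

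Next I would write each operand in normalized form, $y = m_y \cdot 2^{e_y}$ and $x = m_x \cdot 2^{e_x}$ with integer significands satisfying $2^{p-1} \le m_y, m_x \le 2^p - 1$. The inequality $x \le 2y$ together with the range of the significands forces the relation $e_x \in \{e_y,\ e_y+1\}$; this is the one place where the two-sided hypothesis of Sterbenz's Lemma really bites, because without it $e_x$ could exceed $e_y+1$ and the forthcoming exactness argument would fail.

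The remainder would be a two-case analysis on $e_x - e_y$. In the case $e_x = e_y$, both numbers are integer multiples of $2^{e_y}$, so $x - y = (m_x - m_y)\cdot 2^{e_y}$ with $0 \le m_x - m_y < 2^p$, and this is representable (as a normalized or subnormal FP number, which is permitted by the hypothesis that subnormals are available, or trivially here because of the unlimited exponent range assumed in the paper). In the case $e_x = e_y + 1$ I would rewrite $x = (2 m_x)\cdot 2^{e_y}$ and use $y \le x \le 2y$ to deduce $m_y \le 2 m_x \le 2 m_y$, whence $0 \le 2m_x - m_y \le m_y < 2^p$; then $x - y = (2m_x - m_y)\cdot 2^{e_y}$ is again a FP number of precision $p$.

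The only real obstacle is the bookkeeping in the second case — making sure the subtracted significand $2m_x - m_y$ stays in $[0,2^p)$ rather than underflowing to a value that would require a lower exponent than the system supports. This is precisely where the availability of subnormals (or the unlimited exponent range assumed in the paper) is invoked, and it closes the argument.
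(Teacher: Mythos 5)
The paper does not actually prove this lemma: it is quoted with a citation to Sterbenz's book and then used as a black box, so there is no in-paper argument to compare against. Your proof is the standard one and its core is correct: the hypothesis pins $e_x$ to $\{e_y,\, e_y+1\}$, and in both cases $x-y$ is an integer multiple of $2^{e_y}$ with that integer lying in $[0,2^p)$, hence representable (with subnormals or the paper's unlimited-exponent-range convention covering the re-normalization of a small difference). One step is stated too quickly, though: negating both operands only disposes of the sign; it does not take you from $\frac{y}{2}\le x\le 2y$ to $y\le x\le 2y$. For the sub-case $\frac{y}{2}\le x<y$ you must also swap the roles of $x$ and $y$, observing that for positive operands the hypothesis is symmetric ($\frac{y}{2}\le x\le 2y$ is equivalent to $\frac{x}{2}\le y\le 2x$) and that $x-y$ is representable if and only if $y-x$ is. With that one-line addition the reduction is legitimate, and the remaining exponent bookkeeping in the two cases $e_x=e_y$ and $e_x=e_y+1$ is exactly right.
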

Sterbenz Lemma shows that when $\frac y 2 \leq -x \leq 2y$, $\circ(x+y)=x+y$.

Another notation we will frequently use later is 
\begin{equation}
 r(x,y) = 
 \begin{cases}
 \frac {\min(|x|,|y|)} {\max(|x|,|y|)} & xy<0\\
 0 & \text{otherwise}
 \end{cases}.
\end{equation}
Obviously we have $0 \leq r(x,y) \leq 1$. $r(x,y)$ indicates the extent of cancellation when adding $x$ and $y$. When $\frac 1 2 \leq r \leq 1$, $x$ and $y$ meet the condition of Sterbenz Lemma.

Three fundmental algorithms: Fast2Sum, 2Sum, and 2Prod, are detailed below. The Fast2Sum algorithm was originally introduced by \cite{Dekker1971Split}.
    \begin{algorithm}[htb]
		\algsetup{linenosize=\small}
		\small
		\caption{Fast2Sum}\label{algo:Fast2Sum}
		\textbf{Input:} $a, b$. The floating-point exponents $e_a$ and $e_b$ satisfy $e_a\ge  e_b$\\
		\textbf{Output:} $ (s,t)$ s.t. $s+t=a+b$ 
		\begin{algorithmic}
			\STATE $ s \leftarrow \text{RN}(a+ b)$
			\STATE $ t \leftarrow \text{RN}( b-\text{RN}(s-a) )$ 
		\end{algorithmic}
	\end{algorithm}

The conventional 2Sum algorithm, due to \cite{TAOCP} and \cite{10.1007/BF01975722},	does not require $e_a\geq e_b$.	
	\begin{algorithm}[htb]
		\algsetup{linenosize=\small}
		\small
		\caption{2Sum}\label{algo:2Sum}
		\textbf{Input:} $ a,b $\\
		\textbf{Output:} $ (s,t)$ s.t. $s+t=a+b$
		\begin{algorithmic}
		\STATE $    s \leftarrow  \text{RN}(a+b)$
		\STATE $	a' \leftarrow \text{RN}(s-b) $
		\STATE $	b' \leftarrow \text{RN}(s-a') $
		\STATE $ 	\delta_a \leftarrow \text{RN}(a-a') $
		\STATE $ 	\delta_b \leftarrow \text{RN}(b-b')$
		\STATE $	t \leftarrow 	\text{RN}(\delta_a+\delta_b)$
		\end{algorithmic}
	\end{algorithm}

 In the widely used rounding-to-nearest, ties-to-even mode, the Fast2Sum and 2Sum algorithms produce results such that $s + t = a + b$. Conversely, in directed rounding mode, the following inequalities are observed:
\begin{equation*}
|s + t - (a + b)| \leq 2^{-2(p-1)}|a + b|,
\end{equation*}
and
\begin{equation*}
|s + t - (a + b)| \leq 2^{-2(p-1)}|s|.
\end{equation*}
Moreover, the rounding direction of $s + t$ in relation to $a + b$ is consistent with the prescribed floating-point rounding direction \cite{8742617}. Specifically, $s + t \geq a + b$ when rounding toward $+\infty$, and $s + t \leq a + b$ when rounding toward $-\infty$. Another significant feature is the Fast2Sum algorithm's ability to compute $s - a$ without any error\cite{6545904,10.1145/3054947}. Therefore, $t$ can be regarded as a faithful rounding of the expression $a + b - s$, satisfying the condition $|t| < \text{ulp}(s)$.

 
 
As most modern CPUs feature the fused multiply-add (FMA) instruction, it is advantageous to utilize FMA when computing the error in floating-point multiplication, as illustrated in Algorithm \ref{algo:2Prod} \cite{1570854174952231424,10.1145/641876.641878,HandbookFP2018}. In the absence of an FMA instruction, the most well-established method for calculating this error is the splitting algorithm \cite{Dekker1971Split}.
 
		\begin{algorithm}[htb]
		\algsetup{linenosize=\small}
		\small
		\caption{2Prod}\label{algo:2Prod}
		\textbf{Input:} $ a,b $\\
		\textbf{Output:} $ (s,t)$ s.t. $s+t=a\cdot b$  
		\begin{algorithmic}
			\STATE $ s \leftarrow \circ(a\cdot b)$
		    \STATE $ t \leftarrow \circ(a\cdot b-s)$ 
		\end{algorithmic}
	\end{algorithm}

Building upon the foundation established by the previously discussed algorithms, we are now in a position to present the two principal algorithms of this paper: sloppy add and accurate add.

Algorithm \ref{algo:sloppyAdd} was first described by \cite{Dekker1971Split}, albeit with a marginally different approach. It was subsequently implemented by Bailey within the QD library \cite{930115}, where it is referred to as sloppy addition. On the other hand, Algorithm \ref{algo:accurateAdd} has been incorporated into the QD library as IEEE addition. While Algorithm \ref{algo:sloppyAdd} can experience significant relative error due to the cancellation of $x_h$ and $y_h$—potentially as high as $1$—Algorithm \ref{algo:accurateAdd} maintains a relative error bound of $3u^2$ \cite{10.1145/2699469}.


		\begin{algorithm}[htb]
		\algsetup{linenosize=\small}
		\small
		\caption{sloppy addition algorithm}\label{algo:sloppyAdd}
		\textbf{Input:} $ (x_h,x_l), (y_h,y_l) $\\
		\textbf{Output:} $ (z_h,z_l) \approx (x_h,x_l)+ (y_h,y_l)$
		\begin{algorithmic}
			\STATE $ ( s_h, s_l)\leftarrow \text{2Sum}(x_h,y_h)$
			\STATE $ v \leftarrow \text{RN}(x_l+y_l)$
			\STATE $ w \leftarrow \text{RN}(s_l+v)$
			\STATE $ (z_h, z_l) \leftarrow \text{Fast2Sum}(s_h, w)$
		\end{algorithmic}
	\end{algorithm}

	\begin{algorithm}[htb]
		\algsetup{linenosize=\small}
		\small
		\caption{ accurate addition algorithm}\label{algo:accurateAdd}
		\textbf{Input:} $ (x_h,x_l), (y_h,y_l) $\\
		\textbf{Output:} $ (z_h,z_l) \approx (x_h,x_l)+ (y_h,y_l)$
		\begin{algorithmic}
			\STATE $ ( s_h, s_l)\leftarrow \text{2Sum}(x_h,y_h)$
			\STATE $ ( t_h, t_l)\leftarrow \text{2Sum}(x_l,y_l)$
			\STATE $ c \leftarrow \text{RN}(s_l+t_h)$
			\STATE $ (v_h,v_l)\leftarrow \text{Fast2Sum}(s_h,c) $
			\STATE $ w\leftarrow\text{RN}(t_l+v_l)  $
			\STATE $ (z_h, z_l) \leftarrow \text{Fast2Sum}(v_h, w)$
		\end{algorithmic}
	\end{algorithm}

\section{Robustness of Double-Word Addition Algorithms}

In this section, we will demonstrate that both Algorithm \ref{algo:sloppyAdd} and Algorithm \ref{algo:accurateAdd} remain robust even when the inputs do not strictly adhere to the non-overlap condition required for faithful rounding. To begin, we will establish one sufficient condition for the Fast2Sum algorithm.


	\begin{lemma}\label{lem:fast2sumextra}
Assume that the floating-point numbers $a, b \in \mathbb{F}_p$ fulfill the conditions $\text{ufp}(a) \leq \text{ufp}(b)$ and $\text{uls}(a) \geq \text{ulp}(b)$. When all three floating-point operations in the Fast2Sum algorithm are executed with faithful rounding, the equality $s + t = a + b$ is preserved. Furthermore, the rounding directions applied to each of the three floating-point operations may vary.

	\end{lemma}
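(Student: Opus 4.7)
The plan is to represent both summands on a common grid and track exact integer values through the Fast2Sum computation. Write $b = B F$ with $F = \text{ulp}(b)$ and $B$ an integer satisfying $2^{p-1} \leq |B| \leq 2^p - 1$, and write $a = A E$ with $E = \text{uls}(a)$ and $A$ an odd integer. The hypothesis $\text{uls}(a) \geq \text{ulp}(b)$ means $E = 2^k F$ for some integer $k \geq 0$, while $\text{ufp}(a) \leq \text{ufp}(b)$ combined with $|a| < 2\,\text{ufp}(a)$ yields $|A|\,2^k \leq 2^p - 1$. Setting $C = A\,2^k + B$, we obtain $a + b = C F$ with $|C| \leq 2^{p+1} - 2$.

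If $C F$ is representable in $\mathbb{F}_p$, every faithful rounding $\circ_1$ returns $s = C F = a + b$, so $s - a = b$ is representable, $s_1 := \circ_2(s - a) = b$, and $t = \circ_3(b - s_1) = 0$, giving $s + t = a + b$. Otherwise, the key structural claim is that $C$ must itself be odd: writing $C = 2^j C'$ with $C'$ odd, representability of $C F$ is controlled by $C'$, so non-representability forces $|C'| \geq 2^p + 1$, and the bound $|C| = 2^j |C'| \leq 2^{p+1} - 2$ then forces $j = 0$. Hence $|C| \in [2^p + 1,\,2^{p+1} - 3]$ is odd, and the two $\mathbb{F}_p$-neighbors of $C F$ are precisely $(C - 1) F$ and $(C + 1) F$ (both being even multiples of $F$, representable with ulp $2F$). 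Thus any faithful rounding $\circ_1$ delivers $s = (C + \epsilon) F$ for some $\epsilon \in \{-1, +1\}$.

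The remainder is exactness of the two subsequent subtractions. We compute $s - a = (C + \epsilon - A\,2^k) F = (B + \epsilon) F$; since $|B + \epsilon| \leq 2^p$, this is representable (the boundary case $|B + \epsilon| = 2^p$ being handled via $\pm 2^p F = \pm 2^{p-1} \cdot 2 F$), so $s_1 = \circ_2(s - a) = (B + \epsilon) F$ for any faithful $\circ_2$. Then $b - s_1 = -\epsilon F \in \mathbb{F}_p$, so $t = \circ_3(b - s_1) = -\epsilon F$ for any faithful $\circ_3$, and finally $s + t = (C + \epsilon) F - \epsilon F = C F = a + b$.

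The main obstacle I anticipate lies entirely in the non-representable case: establishing that $j = 0$ (forcing $C$ to be odd) and that $(C \pm 1) F$ really are the two $\mathbb{F}_p$-neighbors of $C F$ requires a careful audit of the integer ranges of $C$ and of $B \pm \epsilon$, including the subtle boundary $|B \pm \epsilon| = 2^p$. Once these structural facts are pinned down, all operands fed into $\circ_2$ and $\circ_3$ sit exactly in $\mathbb{F}_p$, which is precisely why the three rounding directions may be chosen independently without disturbing the equality $s + t = a + b$.
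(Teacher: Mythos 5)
Your proposal is correct and follows essentially the same route as the paper's proof: place $a$, $b$, and $a+b$ on the common grid $\text{ulp}(b)$, observe that the sum is an integer multiple $CF$ with $|C|<2^{p+1}$ that is either exactly representable or an odd multiple whose faithful roundings are $(C\pm1)F$, and then verify that the two subsequent subtractions in Fast2Sum are exact. Your version is if anything slightly more careful than the paper's, since you treat both signs uniformly and explicitly dispose of the boundary case $|B+\epsilon|=2^p$, which the paper's case analysis passes over silently.
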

	\begin{proof}
	Without loss of generality, we may assume $a =\pm M 2^e, b = N 2^e$, where $M,N \in \mathbb{N}, 1\leq M < 2^{p}, 2^{p-1}\leq N < 2^p$.
	
	When $a=-M2^e$,  we have
  \begin{equation*}
     a+b = (N-M) 2^e=\circ(a+b),
 \end{equation*}
 Hence  there is no rounding error for $s=\circ(a+b)$, $t=0$. 
	
When $a=M2^e$, one can find that
   \begin{equation*}
  a+b=(M+N)2^e, 2^{p-1}+1 \leq M+N < 2^{p+1}. 
  \end{equation*}
If $M+N<2^p$, then $a+b$ can be represent by floating number $s=(M+N)2^e$, and $a+b=s+t$.
 
If $ 2^p\leq M+N < 2^{p+1}$ and $M+N$ is even, then $s=(M+N)/2 \cdot 2^{e+1}$, $a+b=s+t$. 
	
Next we consider the case: 
\begin{equation*}
 2^p\leq M+N <2^{p+1}, M+N=2Q+1, Q\in\mathbb{N}. 
\end{equation*}

 If no carry happens when rounding $a+b$ to floating-point number $s$, then

 \begin{eqnarray*}
&& s=2Q2^e, Q<2^p,\\[1.5ex]
&& t=\circ(y-\circ(2Q2^e-M2^e)) = \circ(N2^e-(N-1)2^e )=2^e,\\[1.5ex]
&& a+b=(2Q+1)2^e=s+t. 
 \end{eqnarray*}
 
 If there is a carry when rouding $a+b$ to floating-point number $s$, then 

 \begin{eqnarray*}
 && s=(2Q+2)2^e,  Q+1 \leq 2^p,  \\[1.5ex]
 && t=\circ(y-\circ((2Q+2)2^e-M2^e))= \circ(N2^e-(N+1)2^e )=-2^e,\\[1.5ex]
 && a+b=(2Q+1)2^e=s+t.
 \end{eqnarray*}
 
	\end{proof}

 The condition specified in Lemma \ref{lem:fast2sumextra} is optimal in the following sense. Consider the case where $a = 2^{-1} + 2^{-p}$, $b = 1 + 2^{-(p-1)}$, and $\text{uls}(a) = \frac{1}{2} \text{ulp}(b)$. When the sum $s + t$ is computed using the Fast2Sum algorithm, the resulting relative error is of the order $O(u)$, as opposed to $O(u^2)$ or zero. This holds true across all standard rounding modes, including round-to-nearest (RN), round-down (RD), and round-up (RU).

Armed with Lemma \ref{lem:fast2sumextra}, we intend to validate the accuracy of the Fast2Sum implementations in both Algorithm \ref{algo:sloppyAdd} and Algorithm \ref{algo:accurateAdd}.
	
	
	
	\begin{theorem}\label{the:sloppyfast2sum}
Assume that $p \geq 6$ and let there be two numbers $o_x$ and $o_y$ residing within the interval $[1, \frac{1}{8u}-2]$. If the conditions $|x_l| \leq o_x u |x_h|$ and $|y_l| \leq o_y u |y_h|$ are satisfied, then the Fast2Sum operation in Algorithm \ref{algo:sloppyAdd} is guaranteed to be valid in faithful rounding, where the ``valid'' means either error-free or the relative error is $O(u^2)$. 
	\end{theorem}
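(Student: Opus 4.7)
The plan is to argue that the pair $(s_h, w)$ passed to the final Fast2Sum step always satisfies either the classical Fast2Sum hypothesis $e_{s_h} \geq e_w$ (so the standard analysis delivers $O(u^2)$ relative error in faithful rounding), or the weaker hypothesis of Lemma~\ref{lem:fast2sumextra} (which then yields $z_h + z_l = s_h + w$ exactly). The split is on whether the high parts $(x_h, y_h)$ fall into the Sterbenz regime. Throughout, assume WLOG $|x_h| \geq |y_h|$. From the properties of the first 2Sum together with the two faithful roundings that produce $v$ and $w$, the intermediate quantities obey $|s_l| = O(u|s_h|)$, $|v| \leq (1+u)u(o_x+o_y)|x_h|$, and $|w| \leq (1+u)(|s_l|+|v|)$.

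\emph{Case 1} ($r(x_h, y_h) < 1/2$, no Sterbenz cancellation): here $|x_h+y_h| \geq |x_h|/2$, so $|s_h| \geq (1-u)|x_h|/2$, and combining the bounds above with $o_x + o_y \leq \frac{1}{4u} - 4$ yields $|w| < |s_h|$. This gives $\text{ufp}(w) \leq \text{ufp}(s_h)$, i.e.\ $e_{s_h} \geq e_w$, and the standard Fast2Sum bound in faithful rounding applies. \emph{Case 2} ($r(x_h, y_h) \geq 1/2$, Sterbenz regime): Sterbenz's lemma forces $s_h = x_h + y_h$ exactly, and tracing 2Sum shows $s_l = 0$, hence $w = v$. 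If $|s_h| \geq |w|$ we are back in Case~1; otherwise $\text{ufp}(s_h) \leq \text{ufp}(w)$ holds automatically, and it remains to verify the second hypothesis $\text{uls}(s_h) \geq \text{ulp}(w)$ of Lemma~\ref{lem:fast2sumextra}. Since $s_h = x_h+y_h$ is exact, $s_h$ is a multiple of $\min(\text{ulp}(x_h), \text{ulp}(y_h)) > u|y_h| \geq u|x_h|/2$ (using Sterbenz), whereas $\text{ulp}(w) \leq 2u|w| \leq 2(1+u)u^2(o_x+o_y)|x_h|$; the bound $o_x + o_y \leq \frac{1}{4u} - 4$ is then exactly strong enough to force $\text{uls}(s_h) \geq \text{ulp}(w)$, and Lemma~\ref{lem:fast2sumextra} delivers $z_h + z_l = s_h + w$ exactly.

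The main obstacle is the careful bookkeeping of the $(1+u)$ inflation factors introduced by the two consecutive faithful roundings that produce $v$ and $w$; this is precisely where the ``$-2$'' slack in the hypothesis $o_x, o_y \leq \frac{1}{8u} - 2$ is consumed. The lower bound $p \geq 6$ is invoked both to absorb the higher-order $u$-corrections in these constant chases and to guarantee that the admissible range $[1, \frac{1}{8u} - 2]$ is non-empty.
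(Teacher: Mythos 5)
Your proposal is correct and follows essentially the same route as the paper's proof: the paper likewise reduces everything to either the standard $|s_h|\ge |w|$ case (valid Fast2Sum) or the Sterbenz regime, where $s_l=0$, $s_h$ is a nonzero multiple of $\tfrac12\operatorname{ulp}(x_h)$ (so $\operatorname{uls}(s_h)\ge\tfrac12\operatorname{ulp}(x_h)$), and $|w|=O(ou^2|x_h|)$ is small enough that Lemma~\ref{lem:fast2sumextra} applies; the only organizational difference is that the paper argues by contraposition, deriving $r(x_h,y_h)>\tfrac12$ from the assumption $|w|>|s_h|$, whereas you split on $r(x_h,y_h)$ first. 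Your constant bookkeeping (reducing the hypothesis to $o_x+o_y\le\frac{1}{4u}-4$) matches the paper's use of $o=\max\{o_x,o_y\}\le\frac{1}{8u}-2$ up to the same harmless factor-of-two slack that the paper itself carries.
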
 

\begin{proof} Denote by $o=\max\{o_x,o_y\}$.  First we give a bound of $|w|$,
\begin{equation*}
	\begin{aligned}
	 |w| & \leq (|s_l|+|v|)(1+u) \\[1.5ex]
	      & \leq \left( u(|x_h|+|y_h|) + (o_xu|x_h|+o_yu|y_h|)(1+u)\right)(1+u) \\[1.5ex]
	      & \leq (|x_h|+|y_h|) (1+o(1+u))(1+u)u \\[1.5ex]
	      & <(o+2)u(|x_h|+|y_h|). 
\end{aligned}
\end{equation*}

If $|w|\leq |s_h|$ the Fast2Sum algorithm is error-free. Next we assume that $|w| > |s_h|$, then $x_h$ and $y_h$ must have different signs. Let $y_h = -(1-\alpha)x_h, 0 \leq \alpha \leq 1$, then
\begin{eqnarray*}
&&|s_h| \geq (1-u)|x_h+y_h| = \alpha(1-u) |x_h|,\\[1.5ex]
&&|x_h| + |y_h| = (2-\alpha)|x_h|.
\end{eqnarray*}

From $|w| > |s_h|$ we have 
\begin{equation*}
(o+2)u(2-\alpha)|x_h| > \alpha(1-u)|x_h| \Rightarrow 
\alpha < \frac{2u(o+2)}{1+u+ou} = \frac{1}{2} - \frac{1-3ou-7u}{2(1+u+ou)} < \frac{1}{2},
\end{equation*}
Which implies that $x_h$ and $y_h$ meet the condition in Sterbenz Lemma, thus $s_l = 0$. Moreover
\begin{equation*}
|s_h| = k \cdot \frac{1}{2} \text{ulp}(x_h), k \in \mathbb{N}, \Rightarrow \text{uls}(s_h) \geq \frac{1}{2} \text{ulp}(x_h) \text{ or } s_h = 0.
\end{equation*}

As
\begin{equation*}
|w| < u(o+2)(2-\alpha)|x_h|, 
\Rightarrow \text{ulp}(w) \leq 2u|w| < 2u^2(o+2)(2-\alpha)|x_h| \leq 4u(o+2) \text{ulp}(x_h). 
\end{equation*}
Then we obtain that
\begin{equation*}
\frac{1}{2} \text{ulp}(x_h) \geq 4u(o+2) \text{ulp}(x_h), o \leq \frac{1}{8u} - 2.
\end{equation*}
Hence the normalization step is error-free.
\end{proof}	


Similar as Theorem \ref{the:sloppyfast2sum}, we have
\begin{theorem}\label{the:accuratefast2sum}
Assume that $p \geq 6$ and let there be two numbers $o_x$ and $o_y$ residing within the interval $[1, \frac{1}{8u}-2]$. If the conditions $|x_l| \leq o_x u |x_h|$ and $|y_l| \leq o_y u |y_h|$ are satisfied, and $|t_l|\leq o_t u |t_h|, |o_t|\in [1,\frac 1 {8u} -2]$. then the Fast2Sum operation in Algorithm \ref{algo:accurateAdd} is guaranteed to be valid in faithful rounding.
\end{theorem}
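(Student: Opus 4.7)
The plan is to mirror the argument of Theorem \ref{the:sloppyfast2sum} and apply it to the two Fast2Sum calls in Algorithm \ref{algo:accurateAdd}, with the final Fast2Sum$(v_h, w)$ being the more delicate one. For the earlier Fast2Sum$(s_h, c)$, the argument is short: since $c = \circ(s_l + t_h)$ with $|s_l| \leq u|s_h|$ and $|t_h| = O(u)(|x_h|+|y_h|)$, either $|c| \leq |s_h|$ (the standard Fast2Sum precondition) or $|s_h|$ is itself very small and the Sterbenz-plus-Lemma \ref{lem:fast2sumextra} reasoning used in Theorem \ref{the:sloppyfast2sum} applies directly to the pair $(s_h, c)$.

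The main work is the final Fast2Sum$(v_h, w)$. First I would upper-bound $|w| = |\circ(t_l + v_l)|$ using $|v_l| \leq u|v_h|$ from the Fast2Sum property, the hypothesis $|t_l| \leq o_t u |t_h|$, and $|t_h| \leq (|x_l|+|y_l|)(1+u) \leq u(o_x+o_y)(|x_h|+|y_h|)(1+u)$, obtaining
\begin{equation*}
    |w| \leq u|v_h|(1+u) + o_t u^2 (o_x+o_y)(|x_h|+|y_h|)(1+u)^2.
\end{equation*}
Next I would lower-bound $|v_h|$: since $|c| = O(u)|s_h| + O(u^2)(|x_h|+|y_h|)$, we have $|v_h| \geq |s_h|(1-O(u))$, and $|s_h| \geq (1-u)|x_h+y_h|$ from $s_h = \circ(x_h+y_h)$.

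With these bounds in hand I case-split. If $|w| \leq |v_h|$, the standard Fast2Sum argument gives validity. Otherwise $|w| > |v_h|$ forces strong cancellation between $x_h$ and $y_h$: writing $y_h = -(1-\alpha)x_h$ as in Theorem \ref{the:sloppyfast2sum}, the inequality produces $\alpha < \tfrac{1}{2}$, so Sterbenz's Lemma applies, giving $s_l = 0$ and $\text{uls}(s_h) \geq \tfrac{1}{2}\text{ulp}(x_h)$. Since $s_l = 0$, we have $c = \circ(t_h)$, which is tiny, and the perturbation from $s_h$ to $v_h = \circ(s_h+c)$ happens below $\text{uls}(s_h)$, preserving $\text{uls}(v_h) \geq \tfrac{1}{2}\text{ulp}(x_h)$. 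Estimating $\text{ulp}(w)$ in this regime and using the threshold $o_x, o_y, o_t \leq \tfrac{1}{8u}-2$, one obtains $\text{ulp}(w) \leq \text{uls}(v_h)$, and Lemma \ref{lem:fast2sumextra} then yields validity of the final Fast2Sum.

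The main obstacle is the preservation of the Sterbenz-induced trailing-zero structure of $s_h$ through the $s_h + c$ rounding: one must verify that $\text{ufp}(c) < \text{uls}(s_h)$ in the cancellation regime, so that $v_h$ inherits $\text{uls}(v_h) \geq \text{uls}(s_h)$. The role of the third bound $o_t \leq \tfrac{1}{8u}-2$ is precisely to close this estimate, playing the same role for $t_h$ that $o$ played for the inputs in Theorem \ref{the:sloppyfast2sum}.
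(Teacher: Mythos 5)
Your handling of the final $\text{Fast2Sum}(v_h,w)$ in the cancellation regime has a genuine gap. After applying Sterbenz to $(x_h,y_h)$ you get $s_l=0$ and $\text{uls}(s_h)\geq\frac12\text{ulp}(x_h)$, and you then claim the perturbation $v_h=\circ(s_h+c)$ ``happens below $\text{uls}(s_h)$'' and therefore preserves $\text{uls}(v_h)\geq\frac12\text{ulp}(x_h)$. Neither half of this holds. In this regime $c=t_h=\text{RN}(x_l+y_l)$ can be as large as roughly $2ou\max(|x_h|,|y_h|)\approx 4o\,\text{ulp}(x_h)$, so $\text{ufp}(c)<\text{uls}(s_h)$ --- the inequality you single out as the ``main obstacle'' --- is typically \emph{false} exactly where you need it; indeed $|w|>|v_h|$ forces $|s_h|\approx|c|$, so $|c|$ can exceed $|s_h|$ outright. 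And even when $|c|$ is small, its significant bits extend down to $\text{ulp}(t_h)=O(u^2|x_h|)$, so $v_h=\circ(s_h+t_h)$ generically has $\text{uls}(v_h)=\text{ulp}(v_h)\ll\frac12\text{ulp}(x_h)$: adding a tail term destroys the trailing-zero structure of $s_h$ rather than preserving it. (A secondary issue: deducing $\alpha<\frac12$ from $|w|>|v_h|$ needs more care than in Theorem \ref{the:sloppyfast2sum}, because the intermediate bound $|c|\lesssim 5ou\max(|x_h|,|y_h|)$ is not small when $o$ is near $\frac{1}{8u}$.)

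The paper closes this case by a different observation: once $s_l=0$ we have $c=t_h$, and the last three lines of Algorithm \ref{algo:accurateAdd} --- $\text{Fast2Sum}(s_h,t_h)$, $w\leftarrow\circ(t_l+v_l)$, $\text{Fast2Sum}(v_h,w)$ --- are exactly Algorithm \ref{algo:sloppyAdd} applied to the double-words $(s_h,0)$ and $(t_h,t_l)$. The hypothesis $|t_l|\leq o_t u|t_h|$ is precisely the overlap condition needed to invoke Theorem \ref{the:sloppyfast2sum} on this pair, and that theorem applies Sterbenz to $(s_h,t_h)$ --- not to $(x_h,y_h)$ --- which is what actually yields exactness of $v_h=s_h+t_h$ and the uls structure of $v_h$ that Lemma \ref{lem:fast2sumextra} requires. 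Your non-cancellation case ($|w|\leq|v_h|$ via magnitude bounds) and your treatment of $\text{Fast2Sum}(s_h,c)$ --- which is verbatim the Fast2Sum of the sloppy algorithm, since $c$ here equals $w$ there --- agree with the paper. To repair your argument, replace the ``preservation'' step by a second application of the Sterbenz dichotomy, this time to the pair $(s_h,c)$ itself; that is in effect what the paper's reduction accomplishes.
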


\begin{proof}
One notices that the Fast2Sum step $(v_h,v_l)\leftarrow \text{Fast2Sum}(s_h,c)$ is same as $(z_h,z_l) \leftarrow \text{Fast2Sum}(s_h,w)$ in Algorithm \ref{algo:sloppyAdd}, which has been proved to be valid. So we only need to show the validness of the last normalization step. 

If $r(x_h,y_h)<\frac 1 2$. Assume $\alpha=\max(|x_h|,|y_y|)$, we have $2\alpha \geq |s_h|\geq \frac 1 2 \alpha$. $\max(|x_l|, |y_l|)\leq ou \alpha$. $|s_l|\leq 2u \alpha $. $|t_h|\leq 2ou\alpha$.
$|c|=\text{RN}(s_l+t_h)\leq \text{RN}(2 u  \alpha + 2ou \alpha)\leq 5ou\alpha$. $|v_h|\geq |\text{RN}(|s_h|-|c|)|\geq (\frac 1 2 - 5ou)\alpha$. $|c|<|s_h|$, $|v_l|\leq u|v_h|\leq u(2|s_h|) \leq 4u\alpha$. $|w|\leq \text{RN}(|t_l|+|v_l|)\leq \text{RN}(|t_h|+|v_l|) \leq \text{RN}((2ou+4u)\alpha) \leq |v_h|$.

If $r(x_h,y_h)\geq \frac 1 2$. Due to Sterbenz Lemma, we have $s_l=0$, $c=t_h$. The last three steps of Algorithm \ref{algo:accurateAdd} is in fact applying Algorithm \ref{algo:sloppyAdd} on $(s_h,0)$ and $(t_h,t_l)$. Again owing to Theorem \ref{the:sloppyfast2sum}, the last Fast2Sum in Algorithm \ref{algo:accurateAdd} is valid.

\end{proof}

\begin{remark}
Roughly speaking, when the input values to the Fast2Sum operations in Algorithm \ref{algo:sloppyAdd} or Algorithm \ref{algo:accurateAdd} do not meet the condition $|a| \geq |b|$, significant cancellation is likely to occur during the double-word addition. This substantial cancellation tends to produce a large number of trailing zeros in a floating-point number $a \in \mathbb{F}$, resulting in $\text{uls}(a)$ being considerably much larger than $\text{ulp}(a)$. However, even in cases where $|b| > |a|$, as long as the overlap of the inputs is within a moderate range, the condition $\text{ulp}(b) \leq \text{uls}(a)$ holds. By leveraging Lemma \ref{lem:fast2sumextra}, we can assert that the normalization steps within these algorithms will be valid.
\end{remark}

	Now we are going to discuss the absolute error and relative error of the two addition algorithms.
	
	Let's start with a brief review of error bounds of Algorithm \ref{algo:sloppyAdd} in round-to-nearest mode and there is no overlap in inputs. \cite{lauter2005tripledouble} gives relative error bounds of an algorithm similar with Algorithm \ref{algo:sloppyAdd}. The two algorithms are essentially the same except the order of  computation in summation $a_l+b_l+s_l$.
 
 Lauter proves when the $x_hy_h>0$, the relative error is no greater than $4u^2$, and when $x_hy_h <0$, the relative error is no greater than $16u^2$ as long as $r(x_h,y_h)\leq \frac 1 2$.
 
 \cite{10.1145/2699469} prove that when $x_hy_h>0$, Algorithm \ref{algo:sloppyAdd} has a relative error bound of $3u^2$, and \cite{Vicent2022DDNrm2} claim this bound is asymptotically optimal. 
 
 We will give a tight error bound of Algorithm \ref{algo:sloppyAdd} when the input satisfy $x_hy_h<0$ and $r(x_h,y_h)\leq \frac 1 2$.
 
	\begin{theorem}\label{thm:sloppyrel}
		In rounding to nearest mode, suppose $\text{RN}(x_h+x_l)=x_h$, $\text{RN}(y_h+y_l)=y_h$, $x_h$ and $y_h$ have opposite signs. If $r(x_h,y_h) \leq \frac 1 2$, Algorithm \ref{algo:sloppyAdd} 	has relative bound of $3u^2+O(u^3)$. Otherwise if the tie-breaking rule is ties-to-even or ties-to-away, except the case $|x_h|$ and $|y_h|$ are two consective FPNs, the relative error of Algorithm \ref{algo:sloppyAdd} is no greater than $u$. 
	\end{theorem}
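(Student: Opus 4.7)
The plan is to split the argument along the two cases $r(x_h,y_h) \le 1/2$ versus $r(x_h,y_h) > 1/2$. In both cases the starting point is that 2Sum is error-free, so $s_h + s_l = x_h + y_h$ exactly, and that Theorem~\ref{the:sloppyfast2sum} applies: the hypotheses $\text{RN}(x_h+x_l) = x_h$ and $\text{RN}(y_h+y_l) = y_h$ translate into $|x_l| \le u|x_h|$ and $|y_l| \le u|y_h|$, so the closing Fast2Sum is either exact or contributes only an $O(u^2)$ relative correction. Writing $v = \text{RN}(x_l+y_l)$ and $w = \text{RN}(s_l+v)$, and letting $S = x_h + y_h + x_l + y_l$, the assumption that Fast2Sum is exact yields
\begin{equation*}
z_h + z_l - S \;=\; \bigl(w - (s_l+v)\bigr) + \bigl(v - (x_l+y_l)\bigr),
\end{equation*}
reducing the theorem to bounding these two rounding errors against $|S|$.

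For $r(x_h,y_h) \le 1/2$ with opposite signs, I would WLOG take $|x_h| \ge |y_h|$ and set $M = |x_h|$; then $|y_h| \le M/2$, $|x_h+y_h| = |x_h| - |y_h| \in [M/2, M]$, $|x_l+y_l| \le (3/2)uM$, and $|s_l| \le u|x_h+y_h| \le uM$. The first bracket is at most $u|w|$ and the second at most $u|v|$, both of order $u^2 M$. Combined with the denominator estimate $|S| \ge (M/2)(1 - O(u))$ a coarse treatment already gives a relative bound of order $u^2$; to sharpen the constant to $3u^2 + O(u^3)$ I would follow the case-splitting strategy used by \cite{10.1145/2699469} in the same-sign case, distinguishing by the exponent relation between $s_h$, $v$, and $s_l$, and exploiting the fact that the worst-case bounds on $|v|$, $|s_l|$, and the rounding error in $w$ cannot be attained simultaneously.

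For $r(x_h,y_h) > 1/2$, Sterbenz's Lemma forces $s_h = x_h+y_h$ exactly and $s_l = 0$, whence $w = \text{RN}(0 + v) = v$ with no rounding and the only essential error is $\delta_v = v - (x_l+y_l)$, bounded by $u|x_l+y_l|(1+O(u))$. To obtain the relative bound $u$ it then suffices to prove $|x_l+y_l| \le |S|$, i.e.\ $|x_l+y_l| \le |x_h+y_h+x_l+y_l|$. Using $|x_l| \le \tfrac12\text{ulp}(x_h)$ and $|y_l| \le \tfrac12\text{ulp}(y_h)$, and the fact that in the Sterbenz range $|x_h+y_h|$ is a positive integer multiple of the smaller of $\text{ulp}(x_h), \text{ulp}(y_h)$, I would show $|x_h+y_h| \ge 2|x_l+y_l|$ unless $|x_h|$ and $|y_h|$ are exactly one ulp apart, which is the excepted case. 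The tie-breaking hypothesis is invoked at the borderline inputs where, without it, a round-to-even or round-away tie in forming $v$ could enlarge $|\delta_v|$ by a full $\text{ulp}$ and flip the combined sign with $s_h$.

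The main obstacle, in my view, is achieving the tight constant $3u^2 + O(u^3)$ in Case~1: Lauter's analysis gives only $16u^2$ in the opposite-sign regime, so the improvement must come from a careful exponent-based case analysis of $s_l$ and $v$ that tracks how $|s_l+v|$ is constrained when both $|s_l|$ and $|v|$ attempt to be near their upper bounds. In Case~2 the delicate point is the exhaustive enumeration of Sterbenz-range configurations with $|x_h+y_h|$ equal to a small number of ulps, in order to pinpoint the consecutive-FPN case as the unique obstruction to the bound $u$ and to verify that the ties-to-even or ties-to-away rule eliminates the remaining half-ulp edge cases.
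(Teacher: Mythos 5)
Your overall skeleton---error-free 2Sum, Sterbenz to kill $s_l$ when $r(x_h,y_h)>\frac12$, the decomposition of $z_h+z_l-S$ into the two rounding errors of $v$ and $w$, and validity of the final Fast2Sum via Theorem~\ref{the:sloppyfast2sum}---matches the paper's strategy. But there are two genuine gaps.

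First, the tight constant $3u^2$ in the case $r(x_h,y_h)\le\frac12$ is the substance of the theorem, and you only gesture at it (``follow the case-splitting strategy of the same-sign analysis''). Your coarse estimate gives roughly $u\cdot|s_l+v|+u\cdot|v|$ over $|S|\ge M/2$, i.e.\ something like $8u^2$, and the same-sign argument does not transfer directly because here $|S|$ can be as small as $|x_h|/2$ while $|x_l|,|y_l|,|s_l|$ are still calibrated to $|x_h|$. The paper gets $3u^2$ by parametrizing $x_h=M_x2^{e_x}$, $y_h=-M_y2^{e_y}$, splitting on $e_x-e_y\in\{0,1\}$ versus $e_x-e_y\ge2$ and on $\Delta M=M_x2^{e_x-e_y}-M_y$, and using significand-level facts: e.g.\ when $e_x-e_y=1$ and $r\le\frac12$ one has $\Delta M\ge2^{p-1}$ so $s_l=0$ on a subrange; when $e_x-e_y=2$ one has $|s_l|\in\{0,\eta\}$ and the bit of weight $\eta$ in $v$ is zero when $2\eta\le|v|\le\frac52\eta$, so the two roundings cannot both contribute $2u^2$. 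Without an argument of this granularity you only have $O(u^2)$, not $3u^2+O(u^3)$.

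Second, your route to the bound $u$ when $r(x_h,y_h)>\frac12$ relies on the claim that $|x_h+y_h|\ge2|x_l+y_l|$ unless $|x_h|,|y_h|$ are consecutive FPNs, and that claim is false. Take $e_x=e_y+1$, $M_x=2^{p-1}$, $M_y=2^p-2$, so $\Delta M=2$: then $|x_h+y_h|=2\eta$ with $\eta=\mathrm{ulp}(y_h)$, while $|x_l|\le\eta$ and $|y_l|\le\frac12\eta$ allow $|x_l+y_l|$ up to nearly $\frac32\eta$, and $|x_h|,|y_h|$ are \emph{not} consecutive (the FPN $(2^p-1)2^{e_y}$ lies between them). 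Here $|S|$ can be as small as about $\eta/2$ while the rounding error of $v$ is nominally bounded only by $u\eta$, which would give $2u$. The conclusion $\varepsilon\le u$ still holds, but only via the finer observations the paper makes: $M_x=2^{p-1}$ forces $x_l\ge-\frac12\eta$, and when $|x_l+y_l|$ reaches $\eta$ the addition is exact, so the worst numerator and worst denominator cannot occur together. Your proposal as written would stall at exactly this configuration.
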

	
	\begin{proof}
		
	Owing to the symmetry of $(x_h,x_l)$ and $(y_h,y_l)$ in Algorithm \ref{algo:sloppyAdd} and the symmetry of rounding function round-to-nearest and tie-breaking rule,  without loss of generality we may assume that
 \begin{equation*}
 x_h \geq - y_h > 0 .
 \end{equation*}
 Denote by 
 \begin{equation*}
 x_h=M_x\cdot 2^{e_x}, y_h=-M_y\cdot 2^{e_y},
  \end{equation*}
 \begin{equation*}
 2^{p-1}\leq M_x,M_y < 2^p, M_x,M_y\in\mathbb{N}, e_x,e_y\in\mathbb{Z},e_x\geq e_y.
  \end{equation*}
 
 Throughout the proof we will frequently use $\eta = \text{ulp}(y_h)$ as a unit. The sum of $x_h$ and $y_h$ is denoted by
 \begin{equation*}
 \Delta M \eta, \Delta M=M_x\cdot 2^{e_x-e_y} -M_y\in \mathbb{N}. 
 \end{equation*}
And we have
 \begin{equation*}
 r(x_h,y_h)=M_y/ (M_x 2^{e_x-e_y})=M_y/(M_y+\Delta M).
 \end{equation*}
 
 The relative error of addition of $x_l$ and $y_l$ is denoted by 
 \begin{equation*}
 \epsilon, |\epsilon|\leq u, i.e., \text{RN}(x_l+y_l)=(x_l+y_l)\cdot(1+\epsilon). 
 \end{equation*}
	
	Our goal is to proof that when $r(x_h,y_h)\leq \frac 1 2$, the relative error $\varepsilon$ is bounded by $3u^2+O(u^3)$. Now we discuss the relative error $\varepsilon$ accroding to the relation of $e_x$ and $e_y$. 
	
	\begin{itemize}
		\item $e_x=e_y$. 
		 It's easy to see that 
   \begin{equation*}
   r(x_h,y_h) =M_y /M_x > 2^{p-1}/2^p=1/2
   \end{equation*}
   
   when $e_x=e_y$.
		   
		 According to Sterbenz Lemma, we have $s_l=0$. 
   Since 
   \begin{equation*}
   \text{RN}(x_h+x_l)=x_h, \text{RN}(y_h+y_l)=y_h,  
   \end{equation*}
   
   it's clear that 
   \begin{equation*}
   |x_l|,|y_l|\leq \frac 1 2 \eta. 
   \end{equation*}
   
   And
   $|x_l| + |y_l| =\eta$ if and only if $|x_l| = |y_l| = \frac \eta 2$, in which case $ \epsilon  = 0$. 
   
   Thus the absolute error of algorithm is $|\epsilon(x_l + y_l)| < u\eta$, and the relative error is
		\begin{equation} \label{eq:errexeqey}
			\varepsilon =\left | \frac {\epsilon(x_l+y_l)} {s_h+x_l+y_l} \right | < \frac {u\eta} {(\Delta M \eta-(|x_l|+|y_l|))}.
		\end{equation}
		
		Now we are going to discuss $\varepsilon$ according the value of $\Delta M$. 
		\begin{itemize}
			\item 			
			$\Delta M=0$.  Clearly $s_h=0$, and $\varepsilon$ equals to the relative error of $\text{RN}(x_l+y_l)$, $|\varepsilon| \leq u$.
		
			\item 
			$\Delta M=1$. As $|x_l|+|y_l|\leq\eta$, it seems that the relative error can be infinity. However, $|x_l|+|y_l|$ cannot equal $\eta$. Noticing $\Delta M=1$, one of $M_x$ and $M_y$ must be odd and the other be even. According to the ties-to-even rule, only $|a_l|$ whose corresponding $a_h$ has an even last significant digit can be exactly $\frac 1 2 \text{ulp}(a_h)=\frac 1 2 \eta$, and the other can be as large as $(1-u)\frac 1 2 \eta$ at most, in which $a_h \in \{x_h,y_h\}, a_l\in\{x_l,y_l\}$. If the tie-breaking rule is ties-to-away and $|x_l|=|y_l|=\frac 1 2 \eta$, then $x_l=-\frac 1 2 \eta$ and $y_l=\frac 1 2 \eta$, $x_l+y_l=0$. So the relative error $\varepsilon$ remains bounded,
			\begin{equation}
				\varepsilon \leq \frac {u(\frac 1 2 \eta +(1-u)(\frac 1 2)\eta ) } {\eta-(\frac 1 2 \eta +(1-u)(\frac 1 2)\eta) } < 2.
			\end{equation}
			
			\cite{JM2018DoubleDoubleError} gives a counterexample of $\varepsilon=1$ where $e_x=e_y$ and $\Delta M=1$.
			
			\item $\Delta M \geq 2$. Recalling \eqref{eq:errexeqey}, 
			\begin{equation}
				\varepsilon <  \frac {u\eta} {\Delta M \eta - \eta}= \frac {u} {\Delta M -1} \leq  u.
			\end{equation}
		  
		  Since $e_x=e_y$, we have 
    \begin{equation*}
    2^{p-1}\leq M_y < M_x < 2^p. 
    \end{equation*}
    
    Assume that $M_x$ and $M_y$ follow discrete uniform distribution. Then the size of sample space is about$\frac 1 2 (2^{p-1})^2$, and points satisfying 
    \begin{equation*}
    \Delta M\geq 2^k(1\leq k \leq p-2) 
    \end{equation*}
    
    is about 
    \begin{equation*}
    \frac 1 2 (2^{p-1}-2^k)^2. 
    \end{equation*}
    
    So 
    \begin{equation*}
    P(\varepsilon \leq 2^{-p-k})\approx(1-2^{k-(p-1)})^2. 
    \end{equation*}
    
    In the case of double-precision, where $p = 53$. If $k = 40$, then $P(\varepsilon \leq 2^{-93})\approx 0.9995$; if $k = 48$, then $P(\varepsilon \leq 32u^2)\approx 0.8789$. In other words, even $e_x = e_y$ and cancellation happens in $x_h + y_h$, it is still possible to get a relative error of order $u^2$.
		\end{itemize}
		
		\item $e_x=e_y+1$.

		 First we give the bounds related to $x_l$ and $y_l$, 
   \begin{equation*}
   |x_l|\leq \frac 1 2 \text{ulp}(x_h)=\eta, |y_l|\leq \frac 1 2\eta, 
   \end{equation*}
   \begin{equation*}
   |x_l|+|y_l|\leq \frac 3 2 \eta, \text{RN}(|x_l|+|y_l)|\leq \frac 3 2 \eta.
   \end{equation*}
   
   Then the sum of $x_h+y_h$, 
   \begin{equation*}
   \Delta M \geq 2^{p-1}\cdot 2 - (2^p-1)=1,  s_h\geq \eta,
   \end{equation*}
   \begin{equation*}
   \Delta M\leq (2^p-1)\cdot 2-2^{p-1}=3\cdot2^{p-1}-2\leq 2 \cdot 2^p, 
   \end{equation*}
   
   as $\Delta M\eta =s_h+s_l$ and $s_h=\text{RN}(\Delta M \eta)$, we have $|s_l|\in\{0, \eta\}$.
		
		Again we are going to discuss $\varepsilon$ according the value of $\Delta M$. 
		\begin{itemize}
			\item 
			$\Delta M=1$. $\Delta M$ equals $1$ iff 
   \begin{equation*}
   M_x=2^{p-1},M_y=2^p-1, 
   \end{equation*}
   
   and obviously 
   \begin{equation*}
   s_l=0, w=v=\text{RN}(x_l+y_l). 
   \end{equation*}
   
   The relative error 
			\begin{equation}
				\varepsilon = \left | \frac {\epsilon(x_l+y_l)} {\eta + (x_l+y_l)} \right |,
			\end{equation}
			In this case $x_l+y_l$ can not be $-\eta$ either. First considering ties-to-even rule, Since 
   \begin{equation*}
   x_h=M_x2^{e_x}=2^{p-1}2^{e_x}, 
   \end{equation*}
   
   we have 
   \begin{equation*}
   - \frac 1 2 \eta\leq x_l \leq \eta .
   \end{equation*}
   
   If $x_l < - \frac 1 2 \eta$, then  $\text{RN}(x_h+x_l) \neq x_h$, which contradicts the nonoverlap condition $\text{RN}(x_h+x_l) = x_h$. As $M_y$ is odd, $|y_l|\leq (1-u)\frac 1 2 \eta$ because of the ties-to-even rule. If the tie-breaking rule is ties-to-away, we have 
   \begin{equation*}
   - \frac 1 2 \eta\leq x_l \leq \eta , 
   \end{equation*}
   
   and
   \begin{equation*}
   -(1-u)\frac 1 2 \eta \leq y_l \leq \frac 1 2 \eta. 
   \end{equation*}
   
   So in these two tie-breaking rules the relative error is still bounded,
			\begin{equation} \label{eq:errexe_y1M1}
				\varepsilon \leq \frac { u( \eta +(1-u)(\frac 1 2)\eta ) } {\eta-(\frac 1 2 \eta +(1-u)(\frac 1 2)\eta) }  <2,
			\end{equation}
			just like the case when $e_x=e_y, \Delta M=1$.
		    
			\item 
		    $\Delta M\geq 2$.
      To get the relative error bound, we only need to get the error bounds of floatint-point additions $x_l+y_l$ and $s_l+v$. It's trivial to see that 
\begin{equation*}
|v|=|\text{RN}(x_l+y_l)| \leq \text{RN}( 3\eta/ 2 )=3 \eta/2,
\end{equation*} 

and the roundoff error of $\text{RN}(x_l+y_l)$ is 
\begin{equation*}
|\epsilon(x_l+y_l)|\leq 1 /2\text{ulp}(3\eta /2 )= u\eta.
\end{equation*} 

Since $|w|\leq |\text{RN}(3\eta/ 2 +\eta)| = 5\eta /2 $, the roundoff error of $\text{RN}(s_l+v)$ is 
\begin{equation*}
|\text{RN}(s_l+v) -(s_l+v)|\leq 1/ 2 \text{ulp}(5\eta /2 )=2u\eta.
\end{equation*} 

Thus the relative error

		    \begin{equation}
			\begin{aligned}
				\varepsilon &\leq \left | \frac {|\epsilon(x_l+y_l)|+|\text{RN}(s_l+v)  -(s_l+v)|} {\Delta M \eta + (x_l+y_l)} \right | \\
				& =\frac {3u}  {\Delta M  - \frac 3 2 } 	
			\end{aligned}
			\end{equation}
			 
			 According to the inequality above, the error bound may be larger than $u$ when $2\leq \Delta M \leq 4$. However, in such case $s_l=0$, thus the roundoff error of $\text{RN}(s_l+v)$ is zero. It's easy to see that when $\Delta M\geq 3$, $\varepsilon \leq \frac 2 3 u$. Therefore, we only need to consider the case where $\Delta M=2$. Noticing that $e_x-e_y=1$, we can conlude that 
    \begin{equation*}
    M_x=2^{p-1}, M_y=2^p-2.
    \end{equation*}
    
    From $M_x=2^{p-1}$ we can deduce that 
    \begin{equation*}
     -\frac 1 2 \eta \leq x_l \leq \eta. 
    \end{equation*}
    
    In the case of  
    \begin{equation*}
    -\frac 1 2 \eta \leq x_l < 0, |x_l+y_l|\leq \text{RN}(\eta/2+\eta /2)=\eta. 
    \end{equation*}
    
    It seems that roundoff error $\epsilon$ can be $u\eta$. But when $|x_l+y_l|=\eta$, the addition is error-free, even if $\text{RN}(x_l+y_l)=\eta$ because of roundoff, the roundoff error is at most $\frac 1 2 u\eta$ rather than $u\eta$, otherwise if $\text{RN}(x_l+y_l)<\eta$, the roundoff error is clearly bounded by $\frac 1 2 u \eta$. Such that $\varepsilon\leq \frac {u\eta/2} {2\eta -\eta}=\frac u 2$. In the case of $0\leq x_l \leq \eta$, the roundoff error of $\text{RN}(x_l+y_l)$ is bounded by $u\eta$, and $\varepsilon \leq \frac {u\eta} {2\eta-\frac 1 2 \eta}=\frac 2 3 u$. In summary, it still holds that $\varepsilon \leq u$.
		\end{itemize}
		 
		 Now we focus on the sence that the cancellation is not severe, more specifically, 	
   \begin{equation*}
r(x_h,y_h) = \frac {M_y} {M_y + \Delta M}\leq \frac 1 2.
\end{equation*} 
   Obviously $\Delta M\geq M_y \geq 2^{p-1}$. 
   
   If $2^{p-1}\leq \Delta M < 2^p$, then $x_h+y_h=\Delta M\eta$ is a floaint-point number, consequently $s_l=0$. We only need to take in account the roundoff error of $\text{RN}(x_l+y_l)$, and the relative error 
   \begin{equation*}
   \varepsilon \leq \frac u {2^{p-1}-3/2}=2u^2+6u^3+\cdots. 
   \end{equation*}
   
   If $\Delta M\geq 2^p$,
   \begin{equation*}
   \varepsilon \leq \frac {3u} {2^p-3/2}=3u^2+\frac 9 2 u^3+\cdots. 
   \end{equation*}
   
   In summary, 
   \begin{equation*}
   \varepsilon \leq 3u^2+\frac 9 2 u^3+\cdots.
   \end{equation*}
   
   as long as $r(x_h,y_h)\leq \frac 1 2$. Moreover, this bound is a tight. Let $p=53$, consider 
		 \begin{align*}
		 	 x_h &= 844424930131969/2^{49},\\
		 	 x_l &= 2^{-53}, \\
		 	 y_h &= -4503599627370499/2^{53},\\ 
		 	 y_l &= 4714705859903487/2^{152},
		 \end{align*}
		 the relative error of Algorithm \ref{algo:sloppyAdd} is $2.99999999999998\cdots u^2$.
		 
		Similar to the analysis with the case of $e_x = e_y$, we assume that $M_x$ and $M_y$ follow discrete uniform distribution. The size of sample space is about ${2^{p-1}}^2$, number of points satisfying
  \begin{equation*}
  \Delta M \geq 2^k(0\leq k \leq p-1)
  \end{equation*}
  
  is about 
  \begin{equation*}
  {2^{p-1}}^2-\frac 1 2 2^k\cdot 2^{k-1}, 
  \end{equation*}
  
  and the probability of $\varepsilon \leq 3\cdot 2^{-p-k}$ is approximately $1-2^{2(k-p)}$. Let $p=53$, we obtain \begin{equation*}
P(\varepsilon\leq 3\cdot 2^{-94})\approx 1-2^{-24}= 1-5.96\cdots e-8(k=41),
\end{equation*}
\begin{equation*}
P(\varepsilon \leq 6u^2 ) \approx 1-2^{-2}= 0.75(k=p-1).
\end{equation*}
		
		\item $e_x > e_y+1$.  
In this case we always have $r(x_h,y_h)=M_y/M_x < \frac {2^p} { 4\cdot 2^{p-1}}=\frac 1 2$.

		Let $\xi = \text{ulp}(x_h)$, and  $x_h+y_h =c \xi$, then 
  \begin{equation*}
  x_h+y_h > (2^{p-1} - 2^{p-(e_x-e_y)} )\xi,
  \end{equation*}
  
  it follows that 
  \begin{equation*}
   2^{p-2} <  c < 2^p-1.
  \end{equation*}
  
  And  it's straightfoward to show that 
  \begin{equation*}
  |s_h|=\text{RN}(c)\xi. |s_l|\leq \frac 1 2 \text{ulp}(c) \xi.
  \end{equation*}
  
  Ovbiously 
  \begin{equation*}
		|s_l|\leq \frac 1 2\text{ulp}(c)\xi, |x_l|\leq \frac 1 2 \xi. |y_l|\leq \frac 1 2 \text{ulp}(y_h) \leq \frac 1 8 \xi. 
  \end{equation*}
  
  The relative error
		\begin{equation}
		\begin{aligned}
			\varepsilon &\leq  \frac {\frac  1 2 \text{ulp}(|x_l|+|y_l|)+ \frac 1 2 \text{ulp}(|s_l|+(1+\epsilon)(|x_l|+|y_l|))}  {c\xi-|x_l|-|y_l|}  \\
			&\leq \frac {\frac  1 2 \text{ulp}(\frac 5 8 \xi)+ \frac 1 2 \text{ulp}(\frac 1 2 \text{ulp}(c) \xi+(1+u)(\frac 5 8 \xi))}  {c\xi-\frac 5 8 \xi}   \\
			& = \frac  {\frac  1 2 u + \frac 1 2 \text{ulp}(\frac 1 2 \text{ulp}(c)+ \frac 5 8 (1+u))}  {c-\frac 5 8} 	
		\end{aligned}
		\end{equation}
		When $c\in(2^{p-2},2^{p-1})$, $\frac 1 2 \text{ulp}(c)=\frac 1 4$,
		\begin{equation} 
		\begin{aligned}
			\varepsilon & \leq \frac { \frac 1 2 u + \frac 1 2 \text{ulp} (\frac 1 4 + (1+u)(\frac 1 2 + \frac 1 8 ))} {c - \frac 5 8} \\
			& = \frac u {c-\frac 5 8} \\
			& \leq 4u^2+10u^3+\cdots.
		\end{aligned}
		\end{equation}
		The $4u^2$ in above inequality consists of $2u^2$ from $\text{RN}(x_l + y_l)$ and $2u^2$ from $\text{RN}(s_l + v)$. However, the contribution of the two additions can't be $2u^2$ at the same time.
  
  If $e_x-e_y > 2$, $c > 2^{p-1}-2^{p-3}=\frac 3 2 2^{p-2}$, thus 
  \begin{equation*}
  \varepsilon\leq \frac 8 3 u^2+\frac {40} 9 u^3+\cdots. 
  \end{equation*}
  
  Otherwise we have 
  \begin{equation*}
  e_x-e_y = 2, |x_h+y_h|= 4c\eta, 2^p<4c<2^{p+1}. 
   \end{equation*}
   
   It follows that 
   \begin{equation*}
   |x_h+y_h|=\lfloor 2c \rfloor \cdot2\eta + k\eta, k\in\{0,1\}, |s_l|\in\{0,\eta\}.
   \end{equation*}
   
   Next we will discuss the bound of $v$. Since $|x_l|\leq \frac 1 2\xi=2\eta$, $|y_l|\leq \frac 1 2 \eta$, we have 
   \begin{equation*}
   |v|=|\text{RN}(x_l+y_l) |\leq \text{RN} (\frac 5 2 \eta) = \frac 5 2 \eta. 
   \end{equation*}
   
   If $|v| < 2\eta$, the roundoff error of $\text{RN}(x_l+y_l)$ is
   \begin{equation*}
   \epsilon|x_l+y_l|\leq \frac  1 2 \text{ulp}(v)=\frac 1 4 u\xi,
   \end{equation*}
   
   and its contribution to $\varepsilon$ is bounded by $u^2$ instead of $2u^2$. So 
   \begin{equation*}
   \varepsilon \leq 3u^2+\frac {15} 2 u^3+\cdots.
   \end{equation*}
   
   Otherwise if $2\eta\leq |v| \leq \frac 5 2 \eta$, noticing that the significant bit of weight $\eta=\frac 1 2\text{ufp}(v)$ is always $0$ and $|s_l|\in \{0,\eta\}$, thus the addition of $s_l+v$ is error-free, and $\varepsilon\leq 2u^2+5u^3+\cdots$.
		
		To sum up, when $c\in(2^{p-2},2^{p-1})$, $\varepsilon \leq 3u^2+\frac {15} 2 u^3+\cdots$. 
		
		When $c \in [2^{p-1}, 2^p-1)$, $\frac 1 2 \text{ulp}(c)=\frac 1 2$.
		\begin{equation}
		\begin{aligned}
			\varepsilon & \leq \frac { \frac 1 2 u + \frac 1 2 \text{ulp} (\frac 1 2+ (1+u)(\frac 1 2 + \frac 1 8 ))} {c-\frac 5 8} \\
			& \leq \frac {\frac 3 2 u} {c-\frac 5 8} \\
			& \leq 3u^2+\frac {15} 4u^3+\cdots.
		\end{aligned}
		\end{equation}

       The error bound $3u^2$ is tight. Let $p=53$, consider
       \begin{align*}
       x_h &= 6755399441055745/2^{52},\\
       x_l &= 140737488355327/2^{100},\\
       y_h &=-4503599627370489/2^{53},\\
       y_l &= 4714705859903487/2^{152},
       \end{align*}
       the relative error of Algorithm \ref{algo:sloppyAdd} is $2.999999999999982\cdots u^2$.
	\end{itemize}	
\end{proof}

\begin{remark}
 $r(x_h,y_h)$ is an indicator of cancellation extent, from the proof of above theorem, we may find When $r(x_h,y_h) \leq \frac 1 2$, a tight relative error bound of $3u^2$ can be ensured. Otherwise if the tie-breaking rule is ties-to-even or ties-to-away, the relative error can not be large than $u$ if $\Delta M \neq 1$. Only when $\Delta M = 1$, i.e., $|x_h|,|y_h|$ are two consecutive floating-point numbers. the relative error may be in order $O(1)$.
\end{remark}

	Next we are to estimate the error bounds of the accurate addition algorithm \ref{algo:accurateAdd} in the presence of either  round-to-nearest or in faithful rounding.

	\begin{theorem}\label{thm:relerrAccurateFaith}
		Assume $p\geq 6$, if $|x_l|\leq o_x u|x_h|$, $|y_l|\leq o_y u|y_h|$, $1\leq o_x, o_y\leq \frac 1 {8u}-2$. Denote by 
  $o = \max\{o_x,o_y\}$, then the relative error of Algorithm \ref{algo:accurateAdd} is bounded by $(3o+15)u^2+O(u^3)$ if the rounding function is RN. If the rounding function is one of $\{\text{RD},\text{RU}\}$, the above bound is still true if the step  $(t_h,t_l)\leftarrow (x_l,y_l)$ in Algorithm \ref{algo:accurateAdd} is replaced with comparsion and Fast2Sum, where the signs of $x_l, y_l$ are consitent with the rounding direction, i.e., $x_l\geq 0,y_l\geq 0$ when RD and $x_l\leq 0 ,y_l\leq 0$ when RU.
	\end{theorem}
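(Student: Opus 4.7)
The approach is to view Algorithm \ref{algo:accurateAdd} as a six-step chain, isolate the two Fast2Sum normalisations as essentially error-free via Theorem \ref{the:accuratefast2sum}, derive a telescoping identity that expresses $(z_h+z_l)-\bigl((x_h+x_l)+(y_h+y_l)\bigr)$ as the sum of only two rounding residuals $\delta_c$ and $\delta_w$ (plus $O(u^2)$ Fast2Sum residuals, and in directed rounding additional 2Sum residuals), bound each residual using the overlap hypotheses, and finally divide by a lower bound on $|S|:=|(x_h+x_l)+(y_h+y_l)|$.

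\textbf{Validity of the Fast2Sums and the identity.} The hypotheses $|x_l|\le o_xu|x_h|$ and $|y_l|\le o_yu|y_h|$, together with the output structure of the inner step, give $|t_l|\le o_t u|t_h|$ with $o_t=O(1)$, so Theorem \ref{the:accuratefast2sum} certifies that the two Fast2Sum calls are valid. In RN the two 2Sums are exact, which yields the identity
\begin{equation}
z_h+z_l = (x_h+y_h)+(x_l+y_l)+\delta_c+\delta_w+\rho,
\end{equation}
where $\delta_c:=c-(s_l+t_h)$, $\delta_w:=w-(t_l+v_l)$, and $\rho$ collects the Fast2Sum residuals. The standard estimates $|s_l|\le u|s_h|$, $|t_h|\le(1+u)ou(|x_h|+|y_h|)$, $|t_l|\le u|t_h|$, and $|v_l|\le u(|s_h|+|c|)$ then give $|\delta_c|\le u^2\bigl(|s_h|+(1+u)o(|x_h|+|y_h|)\bigr)(1+u)$ and a parallel $O(u^2)(|s_h|+|c|)$ bound for $|\delta_w|$, while $|\rho|$ is $O(u^2)\cdot|z_h+z_l|$.

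\textbf{Division by $|S|$ and case split on $r(x_h,y_h)$.} Following the structure of Theorem \ref{the:accuratefast2sum}, I split on $r(x_h,y_h)$. When $r(x_h,y_h)<1/2$, the ratio $(|x_h|+|y_h|)/|x_h+y_h|\le 3$ and $|S|\ge |x_h+y_h|(1-O(ou))$, so the $|t_h|$-contribution to $|\delta_c|$ yields exactly the leading $3ou^2$ term, while the $|s_l|$, $|v_l|$, $|t_l|$ and Fast2Sum contributions pool into the constant $15u^2$ after routine accounting. When $r(x_h,y_h)\ge 1/2$, Sterbenz forces $s_l=0$ and $c=t_h$ exactly, so the last three steps of Algorithm \ref{algo:accurateAdd} reduce to an application of Algorithm \ref{algo:sloppyAdd} on $(s_h,0)$ and $(t_h,t_l)$; Theorem \ref{thm:sloppyrel} applied to this reduced input returns the sharper $3u^2+O(u^3)$ bound, which is absorbed by $(3o+15)u^2+O(u^3)$.

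\textbf{Directed rounding and main obstacle.} In directed rounding the Section 2 estimates give each 2Sum or Fast2Sum an additional error of size $O(u^2)\cdot|a+b|$; each is already bounded by a multiple of $u^2|x_h+y_h|$ or $u^2(|x_h|+|y_h|)$, so they can be absorbed by bumping every constant in the RN analysis by a factor $1+u$. The sign-consistency hypothesis on $x_l,y_l$ is precisely what guarantees that the magnitude comparison preceding the replacement Fast2Sum is unambiguous and that the resulting $t_l$ remains a faithful rounding of $x_l+y_l-t_h$ with $|t_l|\le u|t_h|$. The main obstacle is the Sterbenz branch in directed rounding, where $s_l$ is no longer exactly $0$ but only $O(u^2|x_h+y_h|)$; verifying that this residual, propagated through $c=\circ(s_l+t_h)$ and the subsequent Fast2Sum, does not enlarge the error past $(3o+15)u^2+O(u^3)$ is the most delicate bookkeeping, and is handled by treating the nonzero $s_l$ as an additional $\delta_c$-style term already absorbed in the constant $15$.
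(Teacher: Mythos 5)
Your architecture for the no-cancellation case ($r(x_h,y_h)\le\frac12$) --- bound the rounding residuals $\delta_c,\delta_w$ by multiples of $u^2(|x_h|+|y_h|)$ and convert to a relative bound via $|x_h|+|y_h|\le 3|x_h+y_h|$ --- is exactly the paper's, and that half is fine. The cancellation branch is where the proposal breaks. You reduce the last three steps to Algorithm \ref{algo:sloppyAdd} on $(s_h,0)$ and $(t_h,t_l)$ and then cite Theorem \ref{thm:sloppyrel} for a $3u^2$ bound, but that theorem returns $3u^2$ only when the \emph{reduced} inputs satisfy $r(s_h,t_h)\le\frac12$; in the complementary sub-case it guarantees only $u$, or even $O(1)$ when $|s_h|$ and $|t_h|$ are consecutive, which is far weaker than $(3o+15)u^2$. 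That sub-case is genuinely reachable: $s_h=x_h+y_h$ can be a few ulps of $y_h$ while $|t_h|\approx|x_l+y_l|$ is of comparable size and opposite sign. The paper closes it with a second application of Sterbenz: then $v_h=s_h+t_h$ exactly and $v_l=0$, and because the low word of $(s_h,0)$ is zero the step $\circ(t_l+v_l)=t_l$ is exact, so the only remaining error is the final normalization. Your proposal never makes this observation. Note also that Theorem \ref{thm:sloppyrel} is stated for RN only, so even where it applies it cannot carry the RD/RU half of the claim.

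The directed-rounding paragraph misidentifies the obstacle. Sterbenz guarantees that $x_h+y_h$ is exactly representable, so $s_l=0$ exactly in \emph{every} rounding mode; there is no ``$s_l=O(u^2|x_h+y_h|)$'' residual to propagate. The real difficulty is that a directed-rounding 2Sum or Fast2Sum has residual $O(u^2)$ relative to \emph{its own operands}, not to the final sum $S=x_h+x_l+y_h+y_l$, and under cancellation $|S|$ can be far smaller than $|x_l|+|y_l|$, so these residuals cannot be absorbed by ``bumping every constant by $1+u$'': the paper exhibits inputs for which the unmodified algorithm has relative error about $2^{-54}$ in RD, i.e.\ of order $u$ rather than $u^2$. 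This is precisely why $\text{2Sum}(x_l,y_l)$ must be replaced, and the proof must actually establish that the replacement is \emph{error-free} --- via Lemma \ref{lem:boldo} when $|e_{x_l}-e_{y_l}|\le p-1$, and via exact truncation under the sign condition when the exponents differ by at least $p$ --- and that $\text{Fast2Sum}(s_h,c)$ can be inexact only when $|c|\le u|s_h|$, a regime with no cancellation where its $u^2|v_h|$ residual is harmless relative to the result. Your proposal asserts the conclusions of these steps without the arguments that make them true.
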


In the proof we need the following Lemma.
	\begin{lemma}\label{lem:boldo}[\cite{10.1145/3054947}]
	Let $a$ and $b$ be two floating-point numbers, and $s \in \{\text{RD}(a +
	b),\text{RU}(a + b)\}$. If the difference $|e_a-e_b|$ of the exponents of $a$ and $b$ does not exceed $p-1$, then	$s-(a + b)$ is a floating-point number.
	\end{lemma}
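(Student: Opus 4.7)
The plan is to establish the result by explicit bookkeeping of integer mantissas and binary exponents. The idea is that once $a$ and $b$ are expressed on the common scale $2^{e_b}$, the sum $a+b$ becomes an integer multiple of $2^{e_b}$, and I will show that the rounding error $t := s - (a+b)$ is either zero or a multiple of $2^{e_b}$ whose mantissa fits in $p$ bits.

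Without loss of generality (using $\text{RU}(-x)=-\text{RD}(x)$ and swapping the roles of $a$ and $b$) I may take $e_a\geq e_b$ and $a+b\geq 0$. Writing $a=M_a\cdot 2^{e_a}$ and $b=M_b\cdot 2^{e_b}$ with integers $|M_a|,|M_b|<2^p$, the hypothesis $e_a-e_b\leq p-1$ yields $a+b = N\cdot 2^{e_b}$ with $N := M_a\cdot 2^{e_a-e_b}+M_b\in\mathbb{Z}$, and the coarse bound $|a|,|b|<2^{e_a+p}$ gives $a+b<2^{e_a+p+1}$.

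Next I would prove the key bound $|t|<2^{e_a+1}$. Because $2^{e_a+p+1}$ is itself a floating-point number strictly larger than $a+b$, any directed rounding $s$ satisfies $s\leq 2^{e_a+p+1}$. If this inequality is strict, $s$ lies in a binade with $\text{ulp}$ at most $2^{e_a+1}$, so $|t|<\text{ulp}(s)\leq 2^{e_a+1}$. In the boundary case $s=2^{e_a+p+1}$, the floating-point neighbor just below $s$ is $s-2^{e_a+1}$, and for $s$ to be a valid directed rounding this neighbor must lie on the opposite side of $a+b$, which again yields $|t|<2^{e_a+1}$.

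The proof then splits by the sign of $e_s - e_b$. If $e_s\geq e_b$, both $s$ and $a+b$ are integer multiples of $2^{e_b}$, so $t = m\cdot 2^{e_b}$ with $m\in\mathbb{Z}$; combined with $|t|<2^{e_a+1}$ and the hypothesis $e_a+1-e_b\leq p$, this gives $|m|<2^p$, so $t$ is representable in precision $p$. If instead $e_s<e_b$, then $a+b$ is a multiple of $2^{e_b}\geq 2^{e_s}=\text{ulp}(s)$, so $t$ is also a multiple of $\text{ulp}(s)$; together with the strict directed-rounding bound $|t|<\text{ulp}(s)$, this forces $t = 0$. The main obstacle I foresee is the boundary case $s = 2^{e_a+p+1}$: the naive estimate $|t|<\text{ulp}(s) = 2^{e_a+2}$ is a factor of two too weak to conclude representability, and one must exploit the finer step to the neighbor on the smaller side at a power-of-two boundary to recover the sharp bound $|t|<2^{e_a+1}$.
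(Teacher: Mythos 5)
The paper does not prove this lemma: it is imported verbatim from the cited reference \cite{10.1145/3054947}, so there is no in-paper argument to compare against. Your proof is correct and self-contained, and it follows the standard route for such representability results: reduce to a common scale $2^{e_b}$ so that $a+b$ is an integer multiple of $2^{e_b}$, bound the directed-rounding error $t$ by $2^{e_a+1}$, and then split on whether $\mathrm{ulp}(s)$ is above or below $2^{e_b}$ to conclude that $t$ is either a multiple of $2^{e_b}$ with at most $p$ significant bits or exactly zero. All the steps check out, including the correct use of the sharp bound $|t|<2^{e_a+1}$ rather than $|t|<\mathrm{ulp}(s)$ in the case $e_s\geq e_b$. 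One small observation: the boundary case $s=2^{e_a+p+1}$ that you single out as the main obstacle is in fact vacuous, since $|a|\leq(2^p-1)2^{e_a}$ and $|b|\leq(2^p-1)2^{e_b}$ give $a+b\leq(2^p-1)2^{e_a+1}$, which is exactly the predecessor of $2^{e_a+p+1}$; hence $\mathrm{RU}(a+b)$ never reaches that power of two unless $a+b$ is already representable. Your conservative treatment of it is harmless, but you could delete it and simply note that $s\leq(2^p-1)2^{e_a+1}$, so $\mathrm{ulp}(s)\leq 2^{e_a+1}$ always.
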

 
	Different from the round-to-nearest, 2Sum and Fast2sum may introduce a relative error of $u^2$ in directed rounding \cite{8742617}, and they are no longer error-free transformations literally. Immediately we know the absolute errors introduced by 2Sum and Fast2Sum are bounded by $2u^2(|x_h|+|y_h|)$ and $3u^2(|x_h|+|y_h|)$ for Algorithm \ref{algo:sloppyAdd} and Algorithm \ref{algo:accurateAdd}, respectively. Apart from Fast2Sum and 2Sum, the other floating-point operations have an error bound of $(2o+1)u^2(|x_h|+|y_h|)$ and $(o+2)u^2(|x_h|+|y_h|)$. 
	
	If $r(x_h,y_h) \leq \frac 1 2$, in other words, there is no severe cancellation, an absolute error bounded $\varepsilon$ implies a relative error bound of $3\varepsilon$. Obviously the relative error bounds of the two algorithms are $(6o+9)u^2$ and $(3o+15)u^2$.

It remains to estimate the relative errors when $r(x_h,y_h)> \frac 1 2$. 
 From Sterbenz Lemma, $x_h+y_h$ is a floaing-point number, and $\text{2Sum}(x_h,y_h)$ is error-free. Anyhow, the relative error of the sloppy addition Algorithm \ref{algo:sloppyAdd} suffers from cancellation because this algorithm drops the roundoff error of $\circ(x_l+y_l)$, we can't expected a satisfactory relative error bound.
 
 
 In accurate addition Algoritm \ref{algo:accurateAdd}, $\text{2Sum}(x_h,y_h)$ is error-free, but $\text{2Sum}(x_l,y_l)$ may not. The error introduced by  $\text{2Sum}(x_l,y_l)$ is of $u^2(|x_l|+|y_l|)\leq u^3(|x_h|+|y_h|)$, and $|x_h+x_l+y_h+y_l|$ can be of order $O(u^2)(|x_h|+|y_h|)$, thus we can't provied a relative error bound of $O(u^2)$ any more. 
 
 For example, let $p=53$, $x_h=2^{52}$, $x_l=1-2^{-53}$, $y_h=-(2^{52}+1)$, $y_l=2^{-107}$, the relative error of Algorithm \ref{algo:accurateAdd} is about $2^{-54}$ in RD mode. 
	
	However, we can try to replace $\text{2Sum}(x_l,y_l)$ with a comparsion and Fast2Sum, to get the desired error bound.
 
\begin{proof}
We will mainly prove the error bound for directed rounding.

The case of $r(x_h,y_h) \leq \frac 1 2$ is concluded in the above discussion. Next we will consider the directed rounding, by replacing $\text{2Sum}(x_l,y_l)$ with a comparsion and Fast2Sum. Without loss of generality we assume $|x_l|\geq |y_l|$. According the Lemma \ref{lem:boldo}, when the rounding function is RD or RU, Fast2Sum is error-free when $e_a-e_b\leq p-1$.

	When $e_{x_l}-e_{y_l} \geq p$. If the rounding function is RD, since $x_l\geq 0$, $y_l\geq 0$. If follows from $e_{x_l}-e_{y_l}\geq p$ that $\text{RD}(x_l+y_l)=x_l$, then we obtain $s=0$ and $t=y_l$, the Fast2Sum is error-free. It also holds true for RU. 
	
	
	The last Fast2Sum contributes a relative error bounded by $u^2$ to the final result, so  we only need to consider the error from $\text{Fast2Sum}(s_h,c)$. 
 
 From Lemma \ref{lem:boldo}, $\text{Fast2Sum}(s_h,c)$ is not error-free only when 
 \begin{equation*}
 e_{s_h}-e_c \geq p.
 \end{equation*}
 
 Which implies
 \begin{equation*}
 |t_h|=|c|\leq 2^{-(p-1)}|s_h|.
 \end{equation*}
 
 Notcing that $u=2^{-(p-1)}$ in directed rounding, $|t_h|\leq u|s_h|$. It follows that 
 \begin{equation*}
 (1-u) |s_h|\leq |v_h| \leq(1+u) |s_h|, 
 \end{equation*}
 \begin{equation*}
 |t_l|\leq u|t_h|\leq u^2|s_h|, 
 \end{equation*}
 \begin{equation*}
 |v_l|\leq u|v_h|\leq (u+u^2)|s_h|. 
 \end{equation*}
 
 Therefore the roundoff error of $\circ(t_l+v_l)$ is bounded by $(u^2+2u^3)|s_h|$, the error of  $\text{Fast2Sum}(s_h,c)$ is also bounded by $u^2|v_h|\leq u^2(1+u)|s_h|$. 
 
 The absolute error is bounded by  $(2u^2+O(u^3))|s_h| $ in total, and 
 \begin{equation*}
 |v_h+w|\geq |v_h|-(1+u)(|v_l|+|t_l|)\geq (1-(1+u)(u+2u^2))|s_h|. 
  \end{equation*}
 
 Adding the relative error $u^2$ from the normalization step, Algorithm \ref{algo:accurateAdd} still have a relative error bound $3u^2$ in this case .
 
In summary, when $(v_h,v_l)\leftarrow \text{Fast2Sum}(s_h,c)$ is not exact, the error of $\text{Fast2Sum}(s_h,c)$ and roundoff error of $\circ(t_l+v_l)$ is bounded by $u^2|s_h|$, and the sum is of order $|s_h|$, thus the relative error of Algorithm \ref{algo:accurateAdd} is of order $u^2$; and the last normalization is safe even not exact.

  If  $(v_h,v_l)\leftarrow \text{Fast2Sum}(s_h,c)$ is exact, we only need to check the influence of roundoff error of $\circ(t_l+v_l)$ in the presence of overlap in inputs. The contribution to the relative error of final result of $\circ(t_l+v_l)$ is
	\begin{equation} \label{eq:errAccurateFaith}
	\begin{aligned}
		\epsilon &\leq \frac {u(|t_l|+|v_l|)} {|x_h+y_h+x_l+y_l|} \\
		         &\leq u \frac {|t_l|+u|s_h+t_h|} {|s_h+t_h+t_l|} \\
		         & \leq u \frac {u|s_h+t_h|-u|t_l|+(1+u)|t_l|} { \left| |s_h+t_h| - |t_l|\right| }\\
		         &\leq u^2+u(1+u)\frac {|t_l|} { \left| |s_h+t_h| - |t_l|\right| }.
	\end{aligned}
	\end{equation}
	If $r(s_h,t_h)> \frac 1 2$, $|s_h+t_h|\geq \frac 1 2 |t_h| \geq \frac 1 {2u}|t_l|$. Plug back into \eqref{eq:errAccurateFaith} we obtain $\epsilon\leq 3u^2+6u^3+\cdots$. Otherwise due to Sterbenz Lemma, $v_l=0$, there is no roundoff error in $\circ(t_l+v_l)$. Taking the error of the normalization into account, the relative error of Algorithm \ref{algo:accurateAdd} is no greater than $4u^2$. 

 If the rounding function is RN, all Fast2Sums and 2Sums are error-free, hence the error only comes from $\text{RN}(t_l+v_l)$, and it's bounded by $3u^2+O(u^3)$ just as in the case of RD or RU.

In conclusion, when $r(x_h,y_h)>\frac 1 2$, Algorithm \ref{algo:accurateAdd} has a relative error bound of $4u^2+O(u^3)$. Combine with the relative error bound $(3o+15)u^2+O(u^3)$ when $r(x_h,y_h)\leq \frac 1 2$, the relative error bound of Algorithm \ref{algo:accurateAdd} is $(3o+15)u^2+O(u^3)$.
	\end{proof}

	The advantage of the accurate addition algorithm, i.e., Algorithm \ref{algo:accurateAdd}, is it's the relative error bound is insensitive to the cancellation of input even when there is overlap in inputs and the rounding function is not RN, which ensures that reasonable result can be obtained when there is no perturbation in the input.  However, such case is rare as input of almost all functions are the output of other functions. An exception is the functions in mathematical library, where the input is expected to be precise. For example, in the implementation of elementary functions in the QD library, the evaluation of Taylor expansions succeeding additive range reduction can safely use sloppy addition algorithm, while the additive range reduction itself cannot. 
 
 Take for example summation, dot product ans polynomial evaluation. Suppose FMA is unavailable, regardless of the error bound of addition algorithm is $\varepsilon(|x|+|y|)$ or $\varepsilon|x+y|$, the error bounds of ordinary recursive summation, recursive dot product, or polynomial evaluation(Horner rule) are 
	\begin{equation*}
	\left|\text{OrdinaryRecursiveSum}(a)-\sum_{i=1}^n a_i\right| \leq	\gamma_{n-1} \sum_{i=1}^n|a_i|,
	\end{equation*}
	\begin{equation*}
    \left|\text{RecursiveDotProduct}(a,b)-\sum_{i=1}^n(a_i\cdot b_i)\right| \leq \gamma_n \sum_{i=1}^n |a_i\cdot b_i|,
	\end{equation*}
	\begin{equation*}
    \left| \text{Horner}(p,x)-p(x) \right| \leq  \gamma_{2n}\sum_{i=0}^n |a_i|\cdot |x|^i.
	\end{equation*}
  In which $\gamma_n$ is defined as $\gamma_n \triangleq \frac {n \varepsilon} {1-n\varepsilon}$\cite{HandbookFP2018}. This implies at least using Algorithm \ref{algo:sloppyAdd} in place of Algorithm \ref{algo:accurateAdd} in BLAS functions based on summation and dot product won't result in much worse error. The only exception is summation of two numbers.

  \section{Applications in Double-Word Algorithms and Interval Arithmetic} 	

 \subsection{Double-word Multiplication}
We will discuss two applications in this section, i.e., double-double multiplication and interval arithmetic. There are many varies of multiplication algorithms for double-double types \cite{JM2018DoubleDoubleError}, and they can be summarized in a unified form (Algorithm \ref{algo:unifymult}).
	
	\begin{algorithm}[htb]  
		\algsetup{linenosize=\small}
		\small
		\caption{unified double-word multiplication}\label{algo:unifymult}
		\textbf{Input:} $ (x_h,x_l), (y_h,y_l) $\\
		\textbf{Output:} $ (z_h,z_l) \approx (x_h,x_l)\times (y_h,y_l)$
		\begin{algorithmic}
			\STATE $ ( c_h, c_l)\leftarrow \text{2Prod}(x_h,y_h)$
			\STATE $ t \leftarrow \text{sum}(c_l,x_hy_l, x_ly_h, x_l y_l)$, and $x_ly_l$ is optional. 
			\STATE $ (z_h, z_l) \leftarrow \text{Fast2Sum}(c_h, t)$
		\end{algorithmic}
	\end{algorithm}
	
	The difference of the multiplication algorithms lies in the strategies of computing summation $c_l+ x_hy_l+ x_ly_h+ x_l y_l$ and whether to ignore the $x_ly_l$ term. 
	
Since 
 \begin{align*}
     |x_l| \leq u |x_h|, \quad      |y_l| \leq u|y_h|,
 \end{align*}
  we have 
  \begin{align*}
|x_hy_l| \leq u|x_hy_h|,\quad |x_ly_h| \leq u|x_hy_h|,\quad 
|x_ly_l|\leq u^2 |x_hy_h|,
  \end{align*}
 provided  with $|c_l|\leq u |c_h|$. Then we can find that $|t|\leq (3u+O(u^2))|c_h|$. 
	
When the normalization step  $ (z_h, z_l) \leftarrow \text{Fast2Sum}(c_h, t)$ is removed and the multiplication is followed by double-word addition, i.e., $(x_h,x_l)$ is the unnormalized product, then we can obtain $|x_l|\leq (3u+O(u^2))|x_h|$ instead of $|x_l|\leq u|x_h|$.

 Given the substantial performance overhead, a combination of multiplication and addition operations, hereafter referred to as MAA (Multiplication and Addition), is often preferred over a double-word Fused Multiply-Add (FMA). The FMA operation is distinct in that it involves only one rounding step, whereas MAA involves separate rounding steps for each operation. It is evident that within the MAA, the error produced by the multiplication operation propagates into the subsequent addition. Consequently, the advantage of using an accurate addition algorithm for reducing error is negated.

As we have previously demonstrated, both sloppy and accurate addition algorithms are capable of correctly handling cases of moderate input overlap. Therefore, it is possible to achieve enhanced performance with MAA by omitting the normalization step in multiplication and utilizing sloppy addition instead.

This approach has been implemented in the mathematical library SLEEF \cite{SLEEF2020}, particularly when evaluating approximation polynomials. SLEEF adopts an even more aggressive strategy by eliminating the normalization step in the addition too, effectively employing the pair arithmetic method as proposed by \cite{Rump2020PairNIC}. Here, we provide a succinct rationale for its efficacy. Prior to the evaluation of approximation polynomials for elementary functions, inputs are typically reduced to a relatively narrow range, where the absolute values are considerably smaller than $1$ \cite{HandbookFP2018}. Additionally, the absolute values of coefficients in Taylor expansions—and by extension, approximation polynomials—tend to decrease monotonically with increasing order, provided the polynomial degrees are not excessively high. Consequently, significant cancellation is unlikely to occur during MAA in the evaluation of these polynomials, ensuring that $|s_h| \gg |w|$ in Algorithm \ref{algo:sloppyAdd}. In essence, the process of evaluating approximation polynomials is well-conditioned.

	In general, the normalization step of Algorithm \ref{algo:sloppyAdd} cannot be omitted, as the extent of cancellation in addition or the satisfaction of the NIC condition proposed by \cite{Demmel2008NIC} is unknown. It's clear that pair arithmetic has better performance at the expense of error bound\cite{Rump2020PairNIC}. Though MAA as we proposed in this paper requires more floating-point operations, the error bound is closer to the double-word algorithm with full normalizations. Numerical experiments in following section will show that the precision loss is negligible.

We now turn our attention to estimating the error of the proposed MAA (Multiplication-Addition Approach) algorithm. Let the exact mathematical result we aim to compute be $a \cdot b + c$, and let $d$ represent the result obtained using MAA. For the purposes of our analysis, we redefine the relative error as
 $$\frac {|d-(a\cdot b+c)|} {|a\cdot b|+|c|}.$$
The justification for this modification is as follows: The precise product of $a \cdot b$ must be truncated to fit within a double-word representation. Consequently, the conventional relative error measure, $\frac{|d - (a \cdot b + c)|}{|a \cdot b + c|}$, can become unbounded, irrespective of the accuracy of the multiplication and addition algorithms employed.

We define a similar modified relative error for the addition of $x$ and $y$ as $\frac{|d - (x + y)|}{|x| + |y|}$, where $d$ is the computed result of the addition algorithm. It is important to note that this modified relative error for addition does not apply to multiplication, where the standard relative error remains applicable.
	
	Assume the modified relative error bounds of multiplication and addition on are $\epsilon_{mul}$ and $\epsilon_{add}$ respectively, then the modifed relative error of MAA is
	\begin{equation}\label {eq:MAArelerr}
	\begin{aligned}
			\epsilon &= \frac  { |d - (a\cdot b+c) |} { |a\cdot b|+|c| } \\
			&\leq \frac { |a\cdot b|\epsilon_{mul} + (|a\cdot b|(1+\epsilon_{mul}) + |c|)\epsilon_{add}  }   { |a\cdot b|+|c|} \\
			& \leq \epsilon_{add}+\epsilon_{mul}(1+\epsilon_{add}) \\
			&\approx \epsilon_{add}+\epsilon_{mul}.
	\end{aligned}
	\end{equation}
 	
	The modified relative error bound of double-word multiplication $\epsilon_{mul}$ given in \cite{JM2018DoubleDoubleError} is $4u^2\sim 5u^2$, and the modified relative error bounds of two double-word addition algorithms Algorithm \ref{algo:sloppyAdd} and Algorithm \ref{algo:accurateAdd} are both no greater than $3u^2$\cite{JM2022DoubleDoubleErrorFormal}. If follows from \eqref{eq:MAArelerr} that when the normalization in multiplication is not skipped, the modified relative error bound of MAA is $7u^2\sim 8u^2$. And when the normalization in multiplication is skipped, the overlap in input of addition is $o \approx 3$, the modified relative error bound of Algorithm \ref{algo:sloppyAdd} $\varepsilon_{add} \approx 7u^2$, then the modifed relative error bound of MAA is $11u^2\sim 12u^2$. 
	
	In the evaluations contain $a\cdot b \pm c \cdot d$, for example, hypot function and complex multiplication, the normalization of the multiplications in $a\cdot b$ and $c \cdot d$ can also be skipped.
 
 \subsection{Interval Arithmetic}
 Next, let us focus on the application of this discovery in interval arithmetic. Suppose we are using directed rounding, and all the floating-point operations are rounded with the same direction, then Algorithm \ref{algo:2Prod} is always error-free\cite{HandbookFP2018}, and the ``rounding direction'' of Fast2Sum or 2Sum is the same with rounding direction of floating-point operation\cite{8742617}. These imply that double-word multiplication algorithms and the two addition algorithm in this paper has the same ``rounding direction'' with floating-point operation. Moreover, form Theorem \ref{the:sloppyfast2sum}  and Theorem \ref{the:accuratefast2sum}, even if the input of the two addition algorithms have moderate overlap, this conclusion is still valid. Which means we can safely remove the normalization step in multiplication in MAA , and the ``rounding direction'' of MAA is also the same with floating-point operation. \cite{9370307} point out that Algorithm \ref{algo:accurateAdd} has this property when the inputs do not overlap, but they didn't check the correctness of Fast2Sums in Algorithm  \ref{algo:accurateAdd} in directed rounding.

 	From above analysises, it's unnecesary to switch float-point rounding direction when doing multiplication, addition, or MAA in interval arithmetic. Considering the performance penalty of switching rounding mode, this can enhance the double-word interval arithmetic performance on the CPUs or accelerators which do not support static rounding. In addition, although $s_h$ and $w$ in Fast2Sum of normalization step in addition algorithms may violate the condition $|s_h|\geq w$ because of directed rounding or input's overlap, the Fast2Sum keeps valid. And it's not necessary to substitute 2Sum for Fast2Sum defensively. Anyhow, for CPUs or Accelerators support static rounding, for example, CPUs equipped with AVX512F, it's preferable to do Fast2Sum and 2Sum in round-to-nearest mode to get tighter inclusion, just like the kv library \cite{KV2022}.

\section{Numerical Examples}\label{sec:numeric}	

Matrix multiplication is widely used as a performance benchmark of MMA (FMA).
	By delicate blocking and packing techniques, the performance of double-precision matrix multiplication can be close to the theoretical peak. However, in order to achieve such high performance, the matrix multiplication code has to been carefully optimized and tuned, which highly depends on the experience of developers.  Considering the SIMDization, we use structure of arrays instead of array of structures, just as the practice in \cite{DDAVX,9603410}. Unfortunately this memory layout prevents using of many existing template libraries and tools, such as C++ template library Eigen\cite{eigenweb}. We decided to write double-double GEMM ourselves. To avoid the tedious handwork, we only performed matrix multiplication on small matrices, thus we can keep the matrices in cache by multiple warm-ups instead of blocking and packing. During the test, we traversed all reasonable matrices sizes, repeated it plenty of times on each sizes, and recorded the best FLOPS. We also coded a naive double-precision GEMM without blocking and packing, and obtained 65.4 GFLOPS by above method, close to 67.7 GFLOPS obtained by Intel Math Kernel Library 2019. Therefore, we believe that this method can get a good performance estimation of complete GEMM with blocking and packing.
	
	As 3 fewer floating-point operations are required by Fast2Sum than 2Sum, one can replace 2Sum with Fast2Sum following a comparsion. Normally this is not recommended because of the performance penalty of wrong branch prediction when comparing on modern CPUs. However, the Intel AVX512DQ instruction set introduces vrangepd, an instruction returns MAX/MIN of the operands by absolute value directly, making it possible to eliminating the conditional branch. Fast2Sum combined with vrangepd only requires 5 operations, in which MAX, MIN, $\circ(a + b)$ are independent, thus exposes instruction-level parallelism. Hopefully we can get better performance by replace 2Sum with Fast2Sum conbined with vrangepd. Therefore, we add the test where 2Sum is replaced by vrangepd and Fast2Sum.  
	
	Algorithm 11 in \cite{JM2018DoubleDoubleError} is used for double-double multiplication in MAA.

	.\cite{7384331} implement an accurate double GEMM with the help of compensated inner product. Though compensation techinque cannot be applied to double-word input, its equivalent working precision can be far higher than double, and has decent performance. So we also implemented a compensated GEMM and added it to the test. In the end, we tested the performance of double-precision GEMM for comparison, specifically, performance of function cblas\_degemm in MKL(Row ``MKL dgemm'' in Table \ref{tab:xeon}) and naive GEMM without blocking and packing(Row ``handwritten dgemm'' in Table \ref{tab:xeon}).

	\begin{table}[H]
		\begin{center} 
			\caption{GEMM performance on Intel(R) Xeon(R) Gold 6148. compiler g++ 9.3.0. optimization option: -march=skylake-avx512 -O3.}
            \label{tab:xeon}
			\resizebox{\linewidth}{!}{ 
			\begin{tabular}{cccccccc}
				\toprule
			\makecell[c]{omit add \\normalization} &  	\makecell[c]{omit mul\\normalization} &	\makecell[c]{ sloppy\\ add} & \#comparison & \#add & \#mul & GFLOPS  & 	\makecell[c]{GFLOPS\\(double)}\\
			   \midrule
			no  &  no  & no  & 0 & 26 & 4 & 2.17 & 56.4 \\   	
			no  &  no  & yes & 0 & 17 & 4 & 3.39 & 57.6 \\  	
			no  &  yes & no  & 0 & 23 & 4 & 2.50 & 57.5 \\  
			no  &  yes & yes & 0 & 14 & 4 & 4.20 & 58.8 \\ 
			no  &  no  & no  & 4 & 20 & 4 & 2.48 & 59.5 \\   	
			no  &  no  & yes & 2 & 14 & 4 & 3.61 & 57.8 \\  	
			no  &  yes & no  & 4 & 17 & 4 & 2.87 & 60.3 \\  
			no  &  yes & yes & 2 & 11 & 4 & 4.57 & 59.4 \\ 
			\midrule
			yes &  yes & no  & 0 & 20 & 4 & 2.99 & 59.8 \\  
			yes &  yes & yes & 0 & 11 & 4 & 5.26 & 57.9 \\ 	
		    yes &  yes & no  & 4 & 14 & 4 & 3.42 & 61.6 \\  
			yes &  yes & yes & 2 & 8  & 4 & 5.79 & 57.9 \\ 
			\midrule
		 \multicolumn{3}{c}{compensated dot product} & 0 & 9 & 2 & 6.68 & 60.1 \\
		 \multicolumn{3}{c}{compensated dot product} & 2 & 6 & 2 & 7.66 & 61.3 \\
		    \midrule
		 \multicolumn{3}{c} {handwritten dgemm} & 0 & 1 & 1 & 65.4 & 65.4 \\
		 \multicolumn{3}{c} {MKL dgemm} & 0 & 1 & 1 & 67.7 & 67.7 \\
			 \bottomrule
			\end{tabular}
			}
		\end{center}
	
	\end{table}
	
	The test result is shown as Table \ref{tab:xeon}.  The 1st and 2nd columns indicate whether the normalizations in the addition and multiplication of MAA are skipped, respectively.  The 3rd column indicates the addition algorithm in MAA: ``yes'' means Algorithm  \ref{algo:sloppyAdd} and ``no'' means Algorithm \ref{algo:accurateAdd}. The 4th column is the number of operations of MAX/MIN by absolute value, i.e., vrangepd instructions.  The 5th and 6th columns are the number of double-precision addition and multiplication operations, respectively. And an FMA operation is counted as a multiplication and an addition. The GFLOPS(double) column shows the equivalent double-precision FLOPS for the various algorithms. As there are more double-precision addition and subtraction operations than multiplications in the double-word MAA, pair arithmetic MAA, or compensated GEMM, and the throughput of double-precision multiplication and addition are equal in the CPU for test\cite{instrinsicsguide}, so the equivalent double-precision FLOPS is calculated by multiplying \#add with number in the column GFLOPS. The \#compare column is the number of vrangepd instructions, nonzero value suggests 2Sum is replaced with combination of vrangepd and Fast2Sum. Considering vrangepd and addition instruction share the same execution ports\cite{xeonarch}, latency and throughput\cite{instrinsicsguide}, vrangepd instructions are treated as additions while calculating equivalent GFLOPS(double).
	 
	Considering the reproducibility, we fixed the CPU frequency to 2.4 GHz and AVX-512 frequency to 2.2 GHz. cpufp\cite{cpufp} is a tool that can accurately measure the CPU FLOPS written in assembly language. By this tool, we obatined a double-precision performance of $70.184$ GFLOPS on test CPU, which is very close to the theoretical  peak $2.2\times 8 \times 2 \times 2 = 70.4$GFLOPS, where 2.2 is the AVX-512 frequency, 8 means the width of AVX-512 registers, and the following double 2s are the number of execution ports supporting FMA as well as the number of floating-point operations in each FMA instruction. One can see that the equivalent double-precision GFLOPS for the  multiple-precision algorithms in Table \ref{tab:xeon} are $80.1\% - 87.4\%$ about the theoretical peak 70.4 GLOPS, not as good as $96.2\%$ by MKL. Yet we believe such performance is  acceptable, as the data dependency in these algorithms impair instruction level parallelism, and more registers are required than double-precision, such that it is more difficult to fully exploit the floating-point capacity of CPU. 
	
	One can see that in double-double GEMM, vrangepd can bring $6\%\sim 15\%$ performance gain(GFLOPS). Also, by replacing accurate addition with sloppy addition and then skipping normalization in multiplication, additional $(\frac {4.57} {2.48} - 1) \times 100\%\approx 84\%$ performance gain can be obtained. vrangepd also contributes $10\%\sim 15\%$ performance boost in pair arithmetic GEMM and compensated GEMM.

  \begin{table}
 	\begin{center} \small
 		\caption{Modified relative error of MAA.} 
            \label{tab:error}
 			\resizebox{\linewidth}{!}{ 
 		\begin{tabular}{cccccccc}
 			\toprule
 			\multirow{2}{*}{\makecell[c]{omit mul \\normalization} } & \multirow{2}{*}{\makecell[c]{sloppy\\add}}  & \multicolumn{3}{c}{average} & \multicolumn{3}{c}{max} \\
 			&                                 & average & std  & max  & average & std   & max  \\
 			\midrule
 			no  & no  &  2.20e-33 & 1.84e-36 & 2.21e-33 & 3.76e-32 & 2.73e-33 & 4.35e-32 \\
 			no  & yes &  2.53e-33 & 1.79e-36 & 2.53e-33 & 3.76e-32 & 2.73e-33 & 4.35e-32 \\
 			yes & no  &  2.22e-33 & 1.80e-36 & 2.22e-33 & 4.25e-32 & 5.90e-33 & 5.35e-32 \\
 			yes & yes &  2.60e-33 & 1.91e-36 & 2.60e-33 & 5.15e-32 & 6.92e-33 & 6.03e-32 \\
 			\bottomrule
 		\end{tabular}
 	}
 	\end{center}
 \end{table}
	 
	 The error of four different MAA algorithms is shown in Table \ref{tab:error}. The test input are arrays $a$, $b$, and $c$ with length $n = 10^6$, and all array elements are random numbers uniformly distributed in $[-1/2, 1/2]$. 
     Since the statistical of relative error is ofen interfered by outliers, i.e., several orders of magnitude greater than average relative errors due to cancellation, we use modifed relative error as an alternative. The test is repeated 10 times to confirm the reliability. 	 
	 
	 From Table \ref{tab:error}, we conclude that skipping the normalization in multiplication or using sloppy addition algorithm instead of accurate one has a marginal effect on error of MAA. Developers can safely enhance the performance by these two methods.
	  
	\section{Conclusion}
    
We have demonstrated the effectiveness of both the sloppy and accurate double-word addition algorithms, showing that they perform well when there is moderate overlap in input under faithful rounding mode. The accurate algorithm still guarantees a relative error bound with only minor changes, showcasing their robustness beyond general expectations. The MAA (FMA) is a fundamental component of linear algebra libraries such as BLAS. By omitting the normalization step in multiplication and replacing accurate addition with sloppy addition, the performance of double-double MAA can be nearly doubled without sacrificing precision. Additionally, we found that vrangepd in the AVX512DQ instruction set can provide a performance boost to double-double GEMM or compensated GEMM without any additional cost. Therefore, we recommend replacing 2Sum with vrangepd+Fast2Sum in machines supporting the AVX512DQ instruction set.

In the case of directed rounding, these double-word algorithms, particularly the two addition algorithms, multiplication algorithm (Algorithm \ref{algo:unifymult}), and MAA, exhibit the desired property: their "rounding directions" remain consistent with floating-point operations. As a result, it is no longer necessary to switch rounding direction in interval arithmetic, reducing the burden of implementing interval arithmetic algorithms and improving the performance of interval arithmetic.

\bibliographystyle{unsrtnat}
\bibliography{main}  






\end{document}